\newtheorem{theorem}{Theorem}[section]
\newtheorem*{thmmain}{Theorem \ref{paradoxical}}
\newtheorem{lemma}[theorem]{Lemma}
\newtheorem{claim}[theorem]{Claim}
\newtheorem{prop}[theorem]{Proposition}
\newtheorem{cor}[theorem]{Corollary}
\theoremstyle{definition}                  
\newtheorem{remark}[theorem]{Remark}
\newtheorem*{notation}{Notation and definitions}
\newcommand{\Z}{\mathbb{Z}}
\newcommand{\R}{\mathbb{R}}
\def\phi{\varphi} 
\newcommand{\eps}{\varepsilon}
\newcommand{\iH}{\mathcal{H}}
\newcommand{\iI}{\mathcal{I}}
\newcommand{\semmi}[1]{}
\newcommand{\diam}{\text{diam}}
\newcommand{\norm}[1]{{\left\lVert #1 \right\rVert}}
\newcommand{\abs}[1]{{\left\lvert #1 \right\rvert}}
\author{Andr\'as M\'ath\'e}
\thanks{Supported by a Leverhulme Trust Early Career Fellowship and the Hungarian Scientific Research Fund grant~104178.}
\begin{document}
\title[Measuring sets with translation invariant Borel measures]{Measuring sets with  \\ translation invariant Borel measures}
\keywords{Hausdorff measure, translation invariant, Polish group, Haar null, Banach space}
\subjclass[2010]{Primary 28A78, 28C10; Secondary 46B15}

\maketitle 

\begin{abstract}
Following Davies, Elekes and Keleti, we study \emph{measured} sets, i.e. Borel sets $B$ in $\R$ (or in a Polish group) for which there is a translation invariant Borel measure assigning positive and $\sigma$-finite measure to $B$. We investigate which sets can be written as a (disjoint) union of measured sets.

We show that every Borel nullset $B\subset \R$ of the second category is larger than any nullset $A\subset \R$ in the sense that there are partitions $B=B_1\cup B_2$, $A=A_1\cup A_2$ and gauge functions $g_1, g_2$ such that the Hausdorff measures satisfy $\iH^{g_i}(B_i)=1$ and $\iH^{g_i}(A_i)=0$ ($i=1,2$). This implies that every Borel set of the second category is a union of two measured sets.

We also present Borel and compact sets in $\R$ which are not a union of countably many measured sets. This is done in two steps.
First we show that \mbox{non-locally} compact Polish groups are not a union of countably many measured sets. Then, to certain Banach spaces
we associate a Borel and/or $\sigma$-compact additive subgroup of $\R$ which is not a union of countably many measured sets.

It is also shown that there are measured sets which are null or non-$\sigma$-finite for every Hausdorff measure of arbitrary gauge function.

\end{abstract}

\section{Introduction and main results}

We say that a Borel set $B$ in $\R$ is \emph{measured} if there is a translation invariant Borel measure which assigns positive and $\sigma$-finite measure to $B$.

It is not trivial to exhibit sets which are not measured.
R.~O.~Davies \cite{davies} constructed a non-empty compact set in $\R$ which is of zero or non-$\sigma$-finite measure for every translation invariant Borel measure (i.e.~not measured). Previously, D.~G.~Larman \cite{larman2} gave an example of a $G_\delta$ set which is not measured.

Since Hausdorff measures (of arbitrary gauge functions) are translation invariant, a set which is not measured is necessarily null or non-$\sigma$-finite for every Hausdorff measure.

D.~Mauldin raised the question whether the set of Liouville numbers $L$ is measured or not. This set $L$ is $G_\delta$ and periodic to every rational number. M.~Elekes and T.~Keleti showed \cite{marcitamas} that every such set, and thus $L$, is not measured. They also showed that non-$F_\sigma$ Borel subgroups of $\R$ are not measured either.

In this paper the decomposition question is investigated: which sets can be written as a (disjoint) union of measured sets. M.~Elekes and T.~Keleti raised several questions related to this: whether the union of two measured sets is necessarily measured, or, on the contrary, every Borel set is the union of finitely or countably many measured sets. We answer these in the negative.

In Section~\ref{s1} we present a theorem which shows that Borel sets of the second category are actually large in terms of ``size'' as well, where size means Hausdorff measure. 
\begin{thmmain}
Let $A, B\subset \R$ be Borel sets of zero Lebesgue measure and assume that $B$ is of the second category. Then there are Borel partitions $B=B_1\cup B_2$, $A=A_1\cup A_2$ and gauge functions $g_1, g_2$ such that the Hausdorff measures satisfy 
\begin{align*}
\iH^{g_1}(B_1)=1, &\quad \iH^{g_1}(A_1)=0, \\
\iH^{g_2}(B_2)=1, &\quad \iH^{g_2}(A_2)=0.
\end{align*}
\end{thmmain}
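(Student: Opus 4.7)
Since $B$ is Borel of the second category, it has the Baire property and is therefore comeager in some nonempty open interval $I\subset\R$; write $B\supset\bigcap_n G_n$ with each $G_n\subset I$ open and dense. Pick two disjoint closed subintervals $I_1,I_2\subset I$. The plan is to construct, independently for $i=1,2$, a compact Cantor-like set $C_i\subset B\cap I_i$ and a gauge $g_i$ satisfying $\iH^{g_i}(C_i)=1$, $\iH^{g_i}(A)=0$ and $\iH^{g_i}(B\setminus C_i)=0$. The Borel partitions $B_1:=B\setminus I_2$, $B_2:=B\cap I_2$, $A_1:=A\setminus I_2$, $A_2:=A\cap I_2$ then satisfy the theorem: $C_i\subset B_i$, $B_i\setminus C_i\subset B\setminus C_i$ and $A_i\subset A$ give $\iH^{g_i}(B_i)=1$ and $\iH^{g_i}(A_i)=0$.

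Fix $i\in\{1,2\}$. Inductively pick scales $r_n\downarrow 0$ (with $r_{n+1}\le r_n/10$, say) and, for each $\sigma\in\{0,1\}^n$, a closed interval $J_\sigma\subset G_n\cap I_i$ of length $r_n$, with the two children $J_{\sigma 0},J_{\sigma 1}\subset J_\sigma$ separated by a gap of at least $r_n/4$. Put $C_i:=\bigcap_n\bigcup_{|\sigma|=n}J_\sigma$; then $C_i\subset\bigcap_n G_n\cap I_i\subset B\cap I_i$. Define the gauge $g_i$ by $g_i(r_n):=2^{-n}$ and monotone interpolation. The natural cover of $C_i$ by its $2^n$ level-$n$ intervals gives $\iH^{g_i}(C_i)\le 1$. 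The uniform Cantor probability measure $\mu$ on $C_i$ (mass $2^{-n}$ on each $J_\sigma$, $|\sigma|=n$) satisfies $\mu(U)\le C_0\,g_i(\diam U)$ for a universal constant $C_0$, thanks to the $r_n/4$-separation of the tree; the mass distribution principle then yields $\iH^{g_i}(C_i)>0$. A final renormalisation of $g_i$ (or truncation of $C_i$ to a Borel subset) makes $\iH^{g_i}(C_i)=1$.

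The scales $r_n$ are chosen so that at each stage $n$ both Borel nullsets $A$ and $B\setminus\bigcup_{|\sigma|=n-1}J_\sigma$ admit covers by intervals of length comparable to $r_n$ and of total Lebesgue length at most $4^{-n}r_n$; the second cover may be placed inside the open complement of $\bigcup_{|\sigma|=n-1}J_\sigma$ where the set lives, hence does not interact with the tree being built. The $g_i$-content of each such cover is of order $8^{-n}$, so $\iH^{g_i}_{r_n}(A)\to 0$ and $\iH^{g_i}(A)=0$. Applying the same estimate at every level and using $B\setminus C_i=\bigcup_n (B\setminus\bigcup_{|\sigma|=n}J_\sigma)$ gives $\iH^{g_i}(B\setminus C_i)=0$.

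\textbf{Main obstacle.} The crux is a diagonal induction that must simultaneously accommodate three constraints at each stage $n$: the scale $r_n$ has to be small enough that both nullsets $A$ and $B$ can be covered at all subsequent scales with $g_i$-content $o(1)$; the intervals $J_\sigma$ must lie inside the dense open $G_n$ (a Baire-category demand supplied by the second category of $B$); and the gauge value $g_i(r_n)=2^{-n}$ must remain large enough that the Cantor measure witnesses $\iH^{g_i}(C_i)>0$ via mass distribution. The two hypotheses on $B$ enter in complementary roles and cannot substitute for one another: the second-category property supplies the $G_n$'s that trap $C_i$ inside $B$, while the Lebesgue nullity of $B$ (and of $A$) is exactly what permits the residual sets $B\setminus C_i$ and $A$ to be killed by $g_i$ at every scale.
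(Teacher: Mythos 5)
Your plan breaks down at the set $A$, and it does so precisely in the case the theorem is really about. You require a gauge $g_i$ with $\iH^{g_i}(C_i)=1$ for a compact set $C_i\subset B\cap I_i$ and simultaneously $\iH^{g_i}(A)=0$ for the whole of $A$. But the theorem allows $A=B$ (and more generally $A\supset B$); then $C_1\subset B\cap I_1=A\cap I_1\subset A$, so $\iH^{g_1}(A)\ge \iH^{g_1}(C_1)=1$. The same objection defeats your spatial partition of $A$: with $A_1=A\setminus I_2\supset C_1$ one gets $\iH^{g_1}(A_1)\ge 1\neq 0$, and no choice of $C_1$ inside $B$ can avoid this when $A\supset B$. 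The partition of $A$ cannot be spatial; it has to be by scales. This is the content of Lemma~\ref{felbontas}: one arranges $\iH^{\min(g_1,g_2)}(A\cup B)=0$ for two \emph{incomparable} gauges (each $g_i$ is large only on its own interlaced sequence of scales, so their pointwise minimum is small everywhere), and then splits $A$ according to which family of scales covers a given point cheaply --- $A_1$ being the points covered infinitely often by intervals on which $g_1$ is the cheaper gauge, $A_2$ the rest. The interlacing $m^1_k\ll m^2_k\ll m^1_{k+1}$ in the proof of Proposition~\ref{g1g2} shows that the two Cantor constructions are emphatically \emph{not} independent: it is exactly what makes $\min(g_1,g_2)$ small enough to annihilate $A\cup B$ while each $g_i$ separately gives positive measure to its own compact set. (The same interlacing is also needed for your claim $\iH^{g_1}(B\setminus C_1)=0$, since $B\setminus C_1\supset C_2$.)

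A second, independent gap: the assertion that at each stage the Borel null sets $A$ and $B\setminus\bigcup_{|\sigma|=n-1}J_\sigma$ admit covers by intervals of length comparable to $r_n$ and of total Lebesgue length at most $4^{-n}r_n$ is false for dense null sets. If $A\supset\Q\cap[0,1]$, a cover by intervals of length comparable to $r_n$ and tiny total length consists of finitely many intervals, and finitely many intervals covering a dense subset of $[0,1]$ have total length at least $1$. One must instead fix a single summable cover $\{I_j\}$ of $A$ covering every point infinitely often and control the tail sums $c(\delta)=\sum_{\diam I_j\le\delta}\diam I_j$, as in the proof of Proposition~\ref{g1g2}; this again forces the scale-based, rather than interval-based, decomposition of $A$. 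The $B$-side of your construction (Cantor sets inside the dense $G_\delta$, gauge defined from the branching numbers, mass distribution principle) is sound and matches the paper's Claim~\ref{mertek}, but as written the argument does not prove the theorem.
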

This theorem might seem paradoxical when applied to $A=B$, or when the Hausdorff dimension of $A$ is larger than that of $B$.
The theorem implies that every Borel set of the second category is a union of two measured sets, in fact, two sets which are measured by Hausdorff measures.

In Section~\ref{s3} we exhibit Borel and also compact sets which are not a union of countably many measured sets. This is done through addressing the problem in Banach spaces and Polish groups.
(We say that a Borel set $A$ in a Polish group is measured if there is a (both left and right) translation invariant Borel measure which gives positive and $\sigma$-finite measure to $A$.)
First we show that Polish groups which are not locally compact are not a union of countably many measured sets (Theorem~\ref{thmpolish}). (This can be seen as a variant of the statements that there is no Haar measure on such groups, and that the union of countably many Haar null sets is also Haar null.)
Using this result, to Banach spaces we associate Borel additive subgroups of $\R$ which are not a union of countably many measured sets. For $\ell_p$ spaces with $1\le p<\infty$ (and in general, when the space has a boundedly complete basis) we obtain a $\sigma$-compact additive subgroup of $\R$ which is not a union of countably many measured sets (Theorem~\ref{banachconstr}). 

Before giving this general construction in Section~\ref{s3} using Banach spaces, we give a direct proof in Section~\ref{s2} that there exists a non-empty compact set in $\R$ which is not a union of countably many measured sets (without referring to Banach spaces and Polish groups). The obtained compact set is very similar to those obtained by the general construction for the Banach space $\ell_1$.

Finally, in Section~\ref{s4} we show that the class of measured sets is not the same as the class of sets which are measured by a Hausdorff measure. This also answers a question of M.~Elekes and T.~Keleti. The proof is based on the facts that being measured relies much on the additive structure of a set, while being measured by a Hausdorff measure (of arbitrary gauge function) is bi-Lipschitz invariant. In fact, we give two (types) of examples. An explicit example imitates Davies's construction \cite{davies} but uses algebraically independent numbers (and a theorem of J.~von~Neumann). The other example involves typical $C^1$ images of small perfect sets. 
Results about typical compact sets are also mentioned.

\begin{notation}
By gauge function we mean a monotone increasing right continuous function $g:[0,\infty)\to [0,\infty)$. The Hausdorff measure with gauge function $g$ is defined as
$$\iH^g(A)=\lim_{\delta\to 0+} \ \inf \left\{ \sum_{i=1}^\infty g(\diam\,U_i) \,:\, A\subset \cup_{i=1}^\infty U_i \text{ and } \diam\,U_i <\delta\right\}.$$  

We say that a measure $\mu$ on the Borel subsets of a Polish group is translation invariant if $\mu(gB)=\mu(Bg)=\mu(B)$ for all Borel sets $B$ and group elements $g$.
\end{notation}

\section{Decomposing sets as the union of measured sets}\label{s1}

We start with a powerful observation.

\begin{lemma}\label{felbontas}
Let $B\subset \R$ and let $g_1, g_2$ be two gauge functions such that \begin{equation*}
\iH^{\min(g_1, g_2)}(B) =0.
\end{equation*}
Then there are disjoint sets $B_1$, $B_2$ such that
$$B=B_1\cup B_2 \quad\text{and}\quad\iH^{g_1}(B_1)=\iH^{g_2}(B_2)=0.$$
If $B$ is Borel or analytic then $B_1$ and $B_2$ can be chosen to be Borel or analytic, respectively.
\end{lemma}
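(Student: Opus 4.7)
The plan is to exploit the hypothesis $\iH^{\min(g_1,g_2)}(B)=0$ by splitting each near-optimal covering of $B$ according to which of the two gauge functions attains the minimum on each individual covering set. Concretely, for each $n$ I would fix an open cover $\{U_i^n\}_{i\in\N}$ of $B$ with $\diam U_i^n < 1/n$ and
$$
\sum_i \min\bigl(g_1(\diam U_i^n),\,g_2(\diam U_i^n)\bigr) < 2^{-n},
$$
and then partition the indices of this cover according to which gauge realises the minimum: let $V^n$ be the union of those $U_i^n$ on which $g_1(\diam U_i^n) \le g_2(\diam U_i^n)$ and $W^n$ the union of the rest. Since Hausdorff covers may be taken open, $V^n$ and $W^n$ are open, $B \subseteq V^n \cup W^n$, and by construction $\iH^{g_1}_{1/n}(V^n) < 2^{-n}$ and $\iH^{g_2}_{1/n}(W^n) < 2^{-n}$.

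The candidate decomposition I would use is $B_1 := B \cap \limsup_n V^n$ and $B_2 := B \setminus B_1$. The estimate for $B_1$ is routine: $B_1 \subseteq \bigcup_{n\ge m} V^n$ for every $m$, and concatenating the corresponding tails of the chosen covers yields $\iH^{g_1}_{1/m}(B_1) \le 2^{1-m}$, hence $\iH^{g_1}(B_1) = 0$. For $B_2$ I would write $B_2 = \bigcup_m C_m$ with $C_m := B \cap \bigcap_{n\ge m}(\R\setminus V^n)$; the key observation is that because $B \subseteq V^n \cup W^n$, any point of $C_m$ must lie in $W^n$ for every $n \ge m$. Thus $C_m \subseteq W^n$ for all $n \ge m$, and letting $n\to\infty$ gives $\iH^{g_2}_\delta(C_m) = 0$ for every $\delta>0$, whence $\iH^{g_2}(B_2) = 0$ by countable subadditivity.

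The descriptive-set-theoretic clause comes for free: with the $V^n$ open, $\limsup_n V^n$ is $G_\delta$ and its complement is $F_\sigma$, so intersecting with $B$ preserves both the Borel and the analytic class. I do not anticipate a serious obstacle anywhere, provided one notices from the outset that the partition should be defined using $\limsup_n V^n$ (equivalently $\liminf_n W^n$) rather than by any single-scale split: a per-$n$ partition controls $\iH^{g_1}$ and $\iH^{g_2}$ only at scale $1/n$, whereas the $\limsup$ is the correct bookkeeping device to synthesise all scales into a single partition of $B$.
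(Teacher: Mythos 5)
Your proposal is correct and is essentially the paper's own argument: the paper likewise splits each near-optimal cover according to which gauge attains the minimum on each covering interval, sets $B_1=B\cap\limsup_k\bigl(\bigcup\iI^1_k\bigr)$ and $B_2=B\setminus B_1$, and notes that $B_1$ is the intersection of $B$ with a $G_\delta$ set. All your estimates (the tail concatenation for $B_1$, the containment of each $C_m$ in every $W^n$ with $n\ge m$ for $B_2$) match the paper's reasoning.
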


\begin{proof}

Let $g(x)=\min(g_1(x), g_2(x))$. Since $\iH^g(B)=0$, we can find countable collections of open intervals $\iI_k$ ($k\ge 1$)
such that $$B\subset \bigcup \iI_k \quad \text{and}\quad \sum_{I\in \iI_k} g(|I|)<2^{-k} \quad (k\ge 1).$$
Based on the length of the intervals we can split each $\iI_k$ as $\iI^1_k \cup \iI^2_k$ where
$$\sum_{I\in \iI^1_k} g_1(|I|) + \sum_{I\in \iI^2_k} g_2(|I|) < 2^{-k}.$$
Let $$B_1=B\cap \bigcap_{n=1}^\infty \bigcup_{k=n}^\infty \bigcup \iI^1_k.$$
Let $B_2=B\setminus B_1$. Then
$$B_2 \,\subset \,\bigcup_{n=1}^\infty \bigcap_{k=n}^\infty \bigcup \iI^2_k
\,\subset\, 
\bigcap_{n=1}^\infty \bigcup_{k=n}^\infty \bigcup \iI^2_k.$$
Clearly, $\iH^{g_1}(B_1)=0$ and $\iH^{g_2}(B_2)=0$.

The rest of the statement follows from the fact that $B_1$ is an intersection of $B$ with a $G_\delta$ set.
\end{proof}

The following statement gives a sufficient condition for a set to be a union of two measured sets.

\begin{lemma}\label{felbontas2}
Let $B\subset \R$ be a Borel (or analytic) set and let $g_1, g_2$ be two gauge functions such that \begin{align*}
\iH^{g_1}(B)& >0, \\
\iH^{g_2}(B)&>0, \\
\iH^{\min(g_1, g_2)}(B)& =0.
\end{align*}
Then $B$ is a union of disjoint Borel (or analytic) sets $B_1, B_2$ with $0<\iH^{g_i}(B_i)<\infty$ ($i=1,2$).
\end{lemma}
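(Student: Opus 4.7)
The plan is to bootstrap from Lemma~\ref{felbontas}, which already hands us a partition with the Hausdorff measures driven to \emph{zero}; the remaining task is to replace those zero values with positive finite ones while preserving a partition of~$B$.

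First I would apply Lemma~\ref{felbontas} to obtain a Borel (analytic) partition $B=B_1^0\cup B_2^0$ with $\iH^{g_1}(B_1^0)=0$ and $\iH^{g_2}(B_2^0)=0$. By subadditivity of the outer measure $\iH^{g_1}$ and the hypothesis $\iH^{g_1}(B)>0$, we must have $\iH^{g_1}(B_2^0)>0$; symmetrically $\iH^{g_2}(B_1^0)>0$. At this point both sets are ``heavy'' in one gauge and ``invisible'' in the other, but the heavy values could be infinite.

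Next I would invoke the classical theorem (due to Davies, and extended to arbitrary gauge functions by Rogers/Howroyd) that every analytic set with positive Hausdorff $g$-measure contains a compact subset of positive finite $\iH^g$-measure. Applying this twice, I extract Borel (in fact compact) subsets $K_1\subset B_2^0$ with $0<\iH^{g_1}(K_1)<\infty$ and $K_2\subset B_1^0$ with $0<\iH^{g_2}(K_2)<\infty$. Because $K_1\subset B_2^0$ and $K_2\subset B_1^0$ sit in disjoint pieces, $K_1\cap K_2=\emptyset$.

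Now set
\[
B_1\;:=\;K_1\cup(B_1^0\setminus K_2), \qquad B_2\;:=\;K_2\cup(B_2^0\setminus K_1).
\]
A direct check gives $B_1\cap B_2=\emptyset$ and $B_1\cup B_2=B_1^0\cup B_2^0=B$. For the measures, $\iH^{g_1}(B_1^0\setminus K_2)\le\iH^{g_1}(B_1^0)=0$, so
\[
0\;<\;\iH^{g_1}(K_1)\;\le\;\iH^{g_1}(B_1)\;\le\;\iH^{g_1}(K_1)+0\;<\;\infty,
\]
and the symmetric computation with $\iH^{g_2}(B_2^0\setminus K_1)\le\iH^{g_2}(B_2^0)=0$ yields $0<\iH^{g_2}(B_2)<\infty$. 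Borelness (or analyticity) of $B_1,B_2$ is inherited from $B_1^0,B_2^0,K_1,K_2$.

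The only non-elementary ingredient is the existence of the compact subsets $K_i$ of positive finite Hausdorff measure for a general gauge function; this is where the proof relies on a genuine theorem rather than soft manipulation. Everything else is formal bookkeeping on top of Lemma~\ref{felbontas}.
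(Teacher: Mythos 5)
Your argument is correct and is essentially identical to the paper's own proof: the same appeal to Lemma~\ref{felbontas}, the same use of inner regularity of generalised Hausdorff measures on analytic sets to extract compact sets $K_i$ of positive finite measure (the paper cites Larman rather than Davies/Rogers/Howroyd, but it is the same ingredient), and your sets $B_1=K_1\cup(B_1^0\setminus K_2)$, $B_2=K_2\cup(B_2^0\setminus K_1)$ coincide with the paper's $(B'_1\cup K_1)\setminus K_2$ and $(B'_2\cup K_2)\setminus K_1$ since $K_1$ and $K_2$ are disjoint. No gaps.
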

\begin{proof}
Let $B'_1$ and $B'_2$ be the sets obtained from Lemma~\ref{felbontas}. Then $B'_{3-i}=B\setminus B'_i$ is analytic, $\iH^{g_i}(B\setminus B'_i)>0$. The inner regularity of (generalised) Hausdorff measures \cite[Theorem 3]{larman} implies that there is a compact set $K_i\subset B\setminus B'_i$ such that $0<\iH^{g_i}(K_i)<\infty$. Let $B_1=(B'_1\cup K_1)\setminus K_2$ and $B_2=(B'_2\cup K_2)\setminus K_1$. Then $B=B_1\cup B_2$ and $0<\iH^{g_i}(B_i)=\iH^{g_i}(K_i)<\infty$.
\end{proof}

The proof of the main theorem of this section will not rely on the cited general inner regularity result.

\begin{prop}\label{g1g2}
Let $B\subset \R$ be a Borel (or analytic) set of the second Baire category and let $A\subset \R$ have Lebesgue measure zero. Then there are gauge functions $g_1, g_2$ such that  
$\iH^{g_1}(B)>0$, $\iH^{g_2}(B)>0$, and $\iH^{\min(g_1, g_2)}(A)=0$.
\end{prop}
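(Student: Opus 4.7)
The plan is first to use the Baire property of analytic sets to reduce to the case where $B$ is comeager in $[0,1]$, and then to build $g_1, g_2$ together with witness Cantor sets $C_1, C_2 \subseteq B$ by a simultaneous inductive construction that exploits Baire category for the Cantor sets inside $B$ and Lebesgue nullity of $A$ for the efficient covers.

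Since $B$ is analytic and of second Baire category, it has the Baire property and is comeager in some nonempty open interval; after translation and rescaling I may assume this interval is $[0,1]$. Then $B \supseteq G$ where $G = \bigcap_n W_n$ is a dense $G_\delta$ in $[0,1]$ with each $W_n$ open dense. Since $A$ has Lebesgue measure zero, I fix for each $n$ an open cover $\iJ_n$ of $A$ by intervals with $\sum_{J\in\iJ_n}|J|<2^{-n}$ and $\sup_{J\in\iJ_n}|J|<2^{-n}$. Then I would inductively construct a rapidly decreasing sequence of scales $r_1>r_2>\cdots\to 0$ and two Cantor-type subsets $C_1,C_2\subseteq G$ in which $C_i$ refines only on one parity of levels (say, $C_1$ refines at odd indices and $C_2$ at even ones), placing the level-$k$ intervals inside $W_k\cap(\text{parent})$ by using density of $W_k$ and choosing $r_k$ small enough for the fit.

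The gauges $g_1,g_2$ are then right-continuous monotone step functions, constant on each $[r_{k+1},r_k)$, with values chosen so that (a) $g_i$ is large at the scales where $C_i$ refines, supporting the mass distribution inequality $\mu_i(U)\le C\,g_i(|U|)$ for the natural uniform Cantor measure $\mu_i$ on $C_i$, and (b) $g_i$ is small at the complementary scales so that $\min(g_1,g_2)$ is controlled there. For $\iH^{g_i}(B)\ge \iH^{g_i}(C_i)>0$ one then applies the mass distribution principle: the level counts $L_k^{(i)}$ and the values $g_i(r_k)$ are arranged so that $\mu_i(U)\le C\,g_i(|U|)$ holds for all small $U$, whence $\iH^{g_i}(C_i)\ge \mu_i(C_i)/C>0$. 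For $\iH^{\min(g_1,g_2)}(A)=0$, the intervals in $\iJ_n$ are replaced by intervals with lengths in the sparse sequence $\{r_k\}$ (at a bounded multiplicative cost in total length, either by chopping long intervals into $r_k$-pieces or by choosing the covers adapted to these scales from the outset), and one computes $\sum_i\min(g_1,g_2)(|J_{n,i}'|)\le C'\sum_i|J_{n,i}'|<C''\cdot 2^{-n}\to 0$.

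The main obstacle is the simultaneous balancing of several competing requirements: the sets $C_i$ must be Lebesgue null (so that they can sit inside $B$, which may be Lebesgue null), which forces $L_k^{(i)}r_k\to 0$; the mass distribution demands $g_i(r_k)\ge c/L_k^{(i)}$ on the scales where $C_i$ is active, so $g_i(r_k)/r_k\to\infty$ there; the covers of $A$ demand that $\min(g_1,g_2)$ be modest at the $A$-cover scales; and monotonicity of the gauges couples all these bounds across scales. The resolution is to take $r_k$ super-exponentially decreasing, so that the very large ratios $r_{k-1}/r_k$ provide enough room between consecutive scales for $g_1$ and $g_2$ to alternate between the ``$C_i$-friendly'' regime and the ``$A$-friendly'' regime via large monotonicity-preserving jumps; I expect the precise verification that all the inequalities can be met at each inductive step to be the most delicate part of the argument.
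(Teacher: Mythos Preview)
Your overall strategy coincides with the paper's: pass to a dense $G_\delta$ subset of $B$ in an interval, build two Cantor-type sets $C_1,C_2\subset G$ that refine at interleaved scales, define $g_i$ so that the natural Cantor measure $\mu_i$ on $C_i$ satisfies the mass distribution inequality (giving $\iH^{g_i}(B)\ge\iH^{g_i}(C_i)>0$), and then argue that $\min(g_1,g_2)$ annihilates $A$.

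The final step as written, however, does not go through. The inequality
\[
\sum_i\min(g_1,g_2)(|J'_{n,i}|)\le C'\sum_i|J'_{n,i}|
\]
with a \emph{fixed} $C'$ would require $\min(g_1,g_2)(r_k)\le C'r_k$ along your scale sequence, and this is incompatible with the other constraints. Since $G$ (and hence each $C_i$) may be Lebesgue null, the mass distribution bound $\mu_i(U)\le C\,g_i(|U|)$ forces $g_i(r_k)/r_k\to\infty$: for $|U|$ just below a level scale of $C_i$, $\mu_i(U)$ is comparable to $|U|$ divided by the total length of the previous level of $C_i$, and that total length tends to $0$. A short check using monotonicity of your step gauges shows that the same blow-up occurs for $\min(g_1,g_2)(r_k)/r_k$, so no uniform $C'$ exists.

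The paper never attempts such a bound. It fixes once and for all a cover of $A$ by intervals $(I_j)$ with $\sum|I_j|<\infty$, sets $c(\delta)=\sum_{|I_j|\le\delta}|I_j|\to 0$, and then \emph{interleaves} the choice of each new scale with $c$: at stage $k$ the current slope $g_i(r^i_k)/r^i_k$ is large but fixed, and the next scale is pushed down far enough that $c(\rho)\cdot g_i(r^i_k)/r^i_k\le 2^{-k}$ for the relevant threshold $\rho$. Combined with the piecewise-linear estimate $g_i(x)\le 2x\,g_i(r^i_k)/r^i_k$ on the appropriate range, this yields $\sum_{I_j\text{ in that range}}\min(g_1,g_2)(|I_j|)\lesssim 2^{-k}$, and summing over $k$ gives $\iH^{\min(g_1,g_2)}(A)=0$. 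Your outline can be repaired along exactly these lines, but the explicit dependence of each new scale on $A$ (through $c(\delta)$) is the crux of the argument and is missing from the computation you displayed; this is precisely the ``delicate'' point you anticipated.
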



The idea of the proof is the following. The set $B$ contains a $G_\delta$ set which is dense in some interval. Inside that, we construct two ``balanced'' compact sets and define a gauge function for each of them such that the corresponding Hausdorff measures are positive (and finite). The constructions should be made such that the resulting gauge functions are incomparable. It does not matter how small the compact sets are if they are small on different scales. Notice that if $g(x)=Cx$ for an arbitrarily large constant $C$, then $\iH^g(A)=0$ since $\iH^g$ is comparable to Lebesgue measure. Let us assume that $g=\min(g_1, g_2)$ is defined on $[y, \infty)$ already.
If we can ensure during the constructions of the compact sets that $$g(x)\approx x\cdot \frac{g(y)}{y}$$
on an interval $[\eps, y]$, where $\eps$ is sufficiently small depending on $y$, $g(y)$ and $A$, then we will be able to achieve $\iH^g(A)=0$.

\begin{proof}
Fix a sequence of open intervals $(I_j)$ such that $\sum \diam\,I_j <\infty$ and every point in $A$ is covered by infinitely many $I_j$. This is possible since $A$ has Lebesgue measure zero.
Let $$c(\delta)=\sum_{\diam\,I_j \le \delta} \diam\,I_j.$$
Then $c(\delta)\to 0$ as $\delta\to 0$.

Every analytic set has the Baire property. Since $B$ is not of the first category, this implies that $B$ contains a $G_\delta$ subset which is dense in some open interval. We may assume that this interval is $(0,1)$. Let $G\subset (0,1)\cap B$ be dense $G_\delta$. Fix a nested decreasing sequence of open sets $G_n\subset (0,1)$ such that $G=\bigcap_n G_n$.

For every positive integer $m$ there exist points $x^m_j\in G_m$ ($j=1, 2, \ldots, m$) such that
$$\left| x^m_j- \frac{j}{m} \right| \le \frac{1}{10m}.$$
Let $r(m)>0$ be such that $$[x^m_j, \,x^m_j+r(m)]\subset G_m.$$
We may assume that $r(m)<1/(10m^2)$.
Let $E_m=\{x^m_1, \ldots x^m_m\}$.
For $i=1,2$, let $(m^i_k)$ be rapidly growing sequences of integers such that
$$m^1_k \ll m^2_k \ll m^1_{k+1} \quad (k\ge 1).$$
Later we will require specific conditions on $(m^i_k)$, but all of them will be satisfied if $m^2_k$ is sufficiently large compared to $m^1_k$, and $m^1_{k+1}$ is sufficiently large compared to $m^2_k$.

For simplicity, let
$$r^i_k=r(m^i_k).$$

Now we fix ``balanced'' compact sets $F^i$ ($i=1,2$) of the form
$$F^i=\bigcap_{k=1}^\infty F^i_k+[0,r^i_k]$$
where $F^i_k\subset E_{m^i_k}$.
Let $F^i_1=E_{m^i_1}$ and let $n^i_1=|F^i_1|$. If $$F^i_k\subset E_{m^i_k}$$ is already defined for some $k\ge 1$, let $F^i_{k+1}$ be a maximal subset of $E_{m^i_{k+1}}$ with the properties that
\begin{itemize}
\item for every $x\in F^i_{k+1}$ there is $y\in F^i_k$ such that
$[x, x+r^i_{k+1}] \subset [y, y+r^i_{k}]$;
\item for all $y\in F^i_k$ the sets $F^i_{k+1}\cap [y, y+r^i_{k}]$ have equal cardinalities. 
\end{itemize}
Let us call this common cardinality $n^i_{k+1}$. Then 
$$|F^i_k| = n^i_1 n^i_2 \cdots n^i_k.$$
Clearly, if $m^i_{k}$ is large compared to $r^i_{k-1}$, then
$${r^i_{k-1}} m^i_k/2 \le n^i_{k} \le 2 r^i_{k-1} m^i_k$$
and
\begin{equation}\label{est}
(r^i_{k-1} m^i_k/2) \lvert F^i_{k-1} \rvert \le |F^i_{k}| \le 2 r^i_{k-1} m^i_k |F^i_{k-1}|.
\end{equation}

Let
$$F^i=\bigcap_{k=1}^\infty F^i_k+[0, r^i_k].$$
This is an intersection of nested compact sets and $F^i\subset G \subset B$.

Define $g_i:[0,r^i_1]\to [0, \infty)$ such that $g_i(0)=0$, 
$$g_i(r^i_k)=\frac{1}{|F^i_k|} \quad (i=1,2 \text{ and } k\ge 1)$$
and $g_i$ is linear on the intervals $[r^i_{k+1}, r^i_k]$.
If the sequences $m^i_k$ tend to infinity fast enough, these gauge functions are strictly increasing; moreover, $r^i_k m^i_k\to 0$ implies that we can also ensure
that
$$\frac{g_i(r^i_k)}{r^i_k}$$ is strictly increasing as $k\to\infty$ for each $i$. (In fact, we can ensure that $g_i$ is concave.)

\begin{claim}\label{mertek}
We have $0<\iH^{g_i}(F^i)\le 1$ ($i=1,2$).
\end{claim}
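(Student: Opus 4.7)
The plan is to sandwich $\iH^{g_i}(F^i)$ between $0$ and $1$: the upper bound comes from the explicit cover that the construction hands us, and the lower bound from the mass distribution principle applied to the natural self-similar probability measure $\mu_i$ on $F^i$.

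The upper bound is immediate: $\{[y, y+r^i_k]:y\in F^i_k\}$ covers $F^i$ and has total $g_i$-cost $|F^i_k|\cdot g_i(r^i_k)=1$, so letting $k\to\infty$ gives $\iH^{g_i}(F^i)\le 1$. The measure $\mu_i$ is defined by assigning mass $1/|F^i_k|$ to each level-$k$ cylinder $F^i\cap [y,y+r^i_k]$; the second bullet of the construction (each level-$(k-1)$ cylinder contains exactly $n^i_k$ level-$k$ cylinders) makes this prescription consistent across levels and produces a well-defined Borel probability measure.

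For the lower bound it suffices, by the mass distribution principle, to produce an absolute constant $C$ with $\mu_i(I)\le C\,g_i(|I|)$ for every interval $I$ of sufficiently small length $d$. Given $d$, pick $k$ with $r^i_{k+1}\le d<r^i_k$. The crucial geometric input is that consecutive points of $E_{m^i_{k+1}}$ lie at distance at least $8/(10 m^i_{k+1})$, which together with $r^i_{k+1}<1/(10(m^i_{k+1})^2)$ means that distinct level-$(k+1)$ cylinders are pairwise separated by at least $7/(10 m^i_{k+1})$; consequently $I$ meets at most $O(1+d\,m^i_{k+1})$ of them. When $d\le 1/m^i_{k+1}$ this yields $\mu_i(I)=O(1/|F^i_{k+1}|)=O(g_i(d))$ using $g_i(r^i_{k+1})=1/|F^i_{k+1}|$. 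When $d>1/m^i_{k+1}$, estimate~(\ref{est}) gives $\mu_i(I)=O(d/(r^i_k|F^i_k|))$, and if one has arranged (by taking $m^i_{k+1}$ large) that $g_i(r^i_{k+1})\le g_i(r^i_k)/2$, then linearity of $g_i$ on $[r^i_{k+1},r^i_k]$ forces $g_i(d)\ge d/(4 r^i_k|F^i_k|)$, matching the two estimates up to a constant.

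The only nontrivial step is this two-scale case analysis in the lower bound; everything else follows from the explicit spacings built into the construction and is routine.
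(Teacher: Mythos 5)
Your proposal is correct and follows essentially the same route as the paper: the canonical level-$k$ cover for the upper bound, and the mass distribution principle for the lower bound with the natural measure $\mu_i$, including the same two-scale case split at $\diam I \approx 1/m^i_{k+1}$. The only cosmetic difference is that you derive $g_i(d)\ge d\,g_i(r^i_k)/(4r^i_k)$ from $g_i(r^i_{k+1})\le g_i(r^i_k)/2$ together with linearity, whereas the paper uses the arranged monotonicity of $g_i(r^i_k)/r^i_k$ in $k$; both follow from the same largeness conditions on $m^i_{k+1}$.
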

\begin{proof}[Proof of Claim~\ref{mertek}]
The obvious covering of $F^i$ by $|F^i_k|$ intervals of length $r^i_k$ shows that $\iH^{g_i}(F^i)\le 1$.

Let $\mu^i$ be the unique Borel probability measure on $F^i$ for which 
$$\mu^i([x, x+r^i_k])=\frac{1}{|F^i_k|} \quad (x\in F^i_k).$$
We claim that $$\mu^i(I)\le 8g_i(\diam\,I)$$
for every interval $I$ of length less than $r^i_1$.

If this is true then $\sum_j g_i(\diam\,I_j)\ge 1/8$ for any sequence of intervals $I_j$ of length less than $r^i_1$ covering $F^i$, and thus $\iH^{g_i}(F^i)\ge 1/8$.

If $\diam\,I\ge 1$ there is nothing to prove. Set $r^i_0=1$. Let $k\ge 0$ be such that $r^i_{k+1}\le \diam\,I\le r^i_k$. Either $\diam\,I<1/(2m^i_{k+1})$ or not. If $\diam\,I\le 1/(2m^i_{k+1})$, then $I$ can intersect only one of the intervals $[x,x+r^i_{k+1}]$ with $x\in F^i_{k+1}$. Therefore 
$$\mu^i(I)\le \frac{1}{|F^i_{k+1}|}=g_i(r^i_{k+1}) \le g_i(\diam\,I).$$

Now assume $1/(2m^i_{k+1}) \le \diam\,I \le r^i_k$.
The minimal distance among points of $E_{m^i_{k+1}}$ is at least $0.8/m^i_{k+1}$, so the same applies to $F^i_{k+1}$. Therefore $\diam\,I$ can intersect at most
$$1+2m^i_{k+1} \diam\,I\le 4m^i_{k+1}\diam\,I$$ intervals of the form $[x,x+r^i_{k+1}]$ with $x\in F^i_{k+1}$. This implies, by \eqref{est}, that
$$\mu^i(I) \le 4m^i_{k+1} (\diam\,I) \frac{1}{|F^i_{k+1}|}\le \frac{4m^i_{k+1} \,\diam\,I}{r^i_{k}m^i_{k+1}|F^i_{k}|/2}
\le
\frac{4 \,\diam\,I}{r^i_k |F^i_k|/2}
.$$ 
Notice that $$g_i(\diam\,I)\ge \frac{\diam\,I}{r^i_k} g_i(r^i_k)=\frac{\diam\,I}{r^i_k |F^i_k|}.$$
Therefore
$$\mu^i(I) \le 8 g_i(\diam\,I).$$
This proves that $\iH^{g_i}(F^i)\ge 1/8$.
\end{proof}

\begin{claim}\label{nulla}
We have $\iH^g(A)=0$ where $g=\min(g_1, g_2)$.
\end{claim}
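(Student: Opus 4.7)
The plan is to show that $\iH^g_\delta(A) = 0$ for every $\delta > 0$. First note that because each point of $A$ is covered by infinitely many $I_j$, the subfamily $\{I_j : \diam\,I_j < \delta\}$ still covers $A$ for every $\delta > 0$. Writing $L_j = \diam\,I_j$, it therefore suffices to show that, by imposing further conditions on the sequences $(m^i_k)$, one can make $\sum_{L_j < \delta} g(L_j)$ arbitrarily small.

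The main analytic ingredient is a linear-through-origin bound on each scale between consecutive kinks. Under the growth $m^1_k \ll m^2_k \ll m^1_{k+1}$, the kinks interleave as $\ldots < r^1_{k+1} < r^2_k < r^1_k < r^2_{k-1} < \ldots$. On $[r^2_k, r^1_k]$, $g_1$ is linear, and because its previous kink $r^1_{k+1}$ is much smaller than $r^2_k$---which can be arranged by adding a condition such as $m^1_{k+1} \ge 10(m^2_k)^2$ to the list of requirements on $m^1_{k+1}$---the linear segment of $g_1$ is well approximated by its extrapolation through the origin of slope $g_1(r^1_k)/r^1_k$. This yields
\[
g(x) \,\le\, g_1(x) \,\le\, \frac{2x}{|F^1_k|\,r^1_k} \qquad \bigl(x \in [r^2_k, r^1_k]\bigr),
\]
and symmetrically $g(x) \le 2x/(|F^2_k|\,r^2_k)$ on $[r^1_{k+1}, r^2_k]$. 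Summing across scales and using $\sum_{L_j \in [r^2_k, r^1_k]} L_j \le c(r^1_k)$ together with the analogue for $g_2$ reduces the claim to making $\sum_k c(r^i_k)/(|F^i_k|\,r^i_k)$ arbitrarily small for $i = 1, 2$.

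Summability is achieved by exploiting the flexibility in the choice of $(m^i_k)$ and $r^i_k = r(m^i_k)$. At the stage when $m^i_k$ is to be chosen, all earlier parameters are already fixed, so $|F^i_k|$ is a specific positive number depending on $m^i_k$ and the preceding data, while $r^i_k$ can be chosen arbitrarily small subject only to $r^i_k < 1/(10(m^i_k)^2)$. Since $c(\delta) \to 0$ as $\delta \to 0$, for any prescribed $\eta > 0$ one may add the requirement $c(r^i_k) < \eta \cdot 2^{-k-3}\,|F^i_k|\,r^i_k$ to the conditions defining $m^i_k$, yielding a $k$-th scale contribution of at most $\eta \cdot 2^{-k-2}$ and hence $\iH^g_\delta(A) \le \eta$. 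Since $\eta$ is arbitrary, $\iH^g_\delta(A) = 0$ for every $\delta > 0$.

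I expect the main obstacle to be collating the accumulated list of conditions on $(m^i_k)$: each of the arguments---the lower bound $\iH^{g_i}(F^i) \ge 1/8$ from Claim~\ref{mertek}, the linear-through-origin approximation used here, the interleaving growth condition, and the new summability condition involving $c$---amounts to a lower bound on $m^i_k$, so all can be satisfied simultaneously by taking $m^i_k$ large enough at each stage. Nevertheless, care is required to verify that the approximation constant $2$ in the display above is not degraded when the other conditions are imposed in parallel.
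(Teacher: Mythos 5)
There is a genuine gap in your summability step, and it is precisely the point that forces the paper's proof to introduce the auxiliary scales $\rho^i_{k+1}=r^i_k\,g_i(r^i_{k+1})/g_i(r^i_k)$. You cut the scale axis at the kinks $r^i_k$ themselves and reduce the claim to $c(r^i_k)\le \eta\,2^{-k-3}\,|F^i_k|\,r^i_k$, which you propose to add "to the conditions defining $m^i_k$". But this is not a largeness condition on $m^i_k$: both sides are determined by the same choice of $m^i_k$, with no freedom left. Indeed $r^i_k=r(m^i_k)$ is bounded \emph{above} by the geometry of the open set $G_{m^i_k}$ (it must satisfy $[x^m_j,x^m_j+r(m)]\subset G_m$), so it may be forced to be extremely small; and from $|F^i_k|\le 2r^i_{k-1}m^i_k|F^i_{k-1}|$ together with $r^i_k<1/(10(m^i_k)^2)$ one gets $|F^i_k|\,r^i_k\le |F^i_{k-1}|\,r^i_{k-1}/(5m^i_k)$, so increasing $m^i_k$ only shrinks the right-hand side. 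Your condition amounts to $c(r)/r\le \eta\,2^{-k-3}|F^i_k|$ at $r=r^i_k$, and since $A$ is only assumed Lebesgue null there is no control on how slowly $c(\delta)$ decays, i.e.\ $c(\delta)/\delta$ may blow up arbitrarily fast as $\delta\to 0$. For instance, if the sets $G_m$ force $r(m)=2^{-2^m}$ while $c(\delta)\sim 1/\log\log(1/\delta)$, the inequality fails for every large $m$; choosing $r^i_k$ even smaller than $r(m^i_k)$ only makes $c(r)/r$ larger. So the required inequality cannot in general be arranged, and the claimed bound on $\sum_k c(r^i_k)/(|F^i_k|r^i_k)$ does not follow.

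The paper circumvents this by (a) partitioning the intervals $I_j$ according to the thresholds $\rho^1_k,\rho^2_k$ rather than $r^1_k,r^2_k$, and (b) on the block $\rho^2_k<\diam I\le\rho^1_k$ bounding $g\le g_2$ by the through-origin line of slope $2g_2(r^2_{k-1})/r^2_{k-1}$ coming from the \emph{other} gauge function at the \emph{previous} stage (and symmetrically on the other block). The resulting requirement, $c(\rho^1_k)/r^2_{k-1}\le 2^{-(k-1)}$, \emph{is} a pure largeness condition on $m^1_k$: one has $\rho^1_k=r^1_{k-1}|F^1_{k-1}|/|F^1_k|\asymp 1/m^1_k$, which---unlike $r^1_k=r(m^1_k)$---is not capped by the geometry of $G$ and can be driven to $0$ by increasing $m^1_k$ after $r^2_{k-1}$ has already been fixed. (A smaller related slip: your justification of the factor-$2$ linear bound on $[r^2_k,r^1_k]$ invokes $r^1_{k+1}\ll r^2_k$ via $m^1_{k+1}\ge 10(m^2_k)^2$; what is actually needed is $\rho^1_{k+1}\le r^2_k$, i.e.\ that the \emph{value} $g_1(r^1_{k+1})=1/|F^1_{k+1}|$ be at most $r^2_k\,g_1(r^1_k)/r^1_k$, which translates to $m^1_{k+1}\gtrsim 1/r^2_k$---a condition in terms of $1/r^2_k$, not of $m^2_k$.) Your overall architecture---linear-through-origin bounds between kinks plus a dyadic decomposition of scales---is the right one, but the decoupling of the parameter controlling the cutoff from the parameter controlling the slope is the essential missing idea.
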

\begin{proof}[Proof of Claim~\ref{nulla}]
Let $$\rho^i_{k+1}=r^i_k \frac{g_i(r^i_{k+1})}{g_i(r^i_k)}.$$
Then $r^i_{k+1}<\rho^i_{k+1} < r^i_k$.
If $m^1_{k+1}$ is chosen large enough compared to $m^2_k$ (and $1/r^2_k$), and $m^2_k$ is chosen large enough compared to $m^1_k$ (and $1/r^1_k$), then we can have
\begin{equation}\label{cover}
\rho^2_{k+1}<\rho^1_{k+1}<r^2_k<r^1_k \text{ and } \rho^1_{k+1}<\rho^2_{k} < r^1_k<r^2_{k-1},
\end{equation}
and 
\begin{equation}\label{m2klarge}
\frac{c(\rho^1_{k+1})}{r^2_k}\le 2^{-k},
\end{equation}
and
\begin{equation}\label{m1klarge}
\frac{c(\rho^2_{k+1})}{r^1_{k+1}}\le 2^{-k}.
\end{equation}

Recall that $$\frac{g_i(r^i_{k+1})}{r^i_{k+1}}>\frac{g_i(r^i_k)}{r^i_k}.$$
This implies that
for $x\in [r^i_{k+1}, r^i_1]$ we have
$$g_i(x)\le g_i(r^i_{k+1}) + x\cdot \frac{g_i(r^i_k)}{r^i_k} = \rho^i_{k+1}\cdot\frac{ g_i(r^i_k)}{r^i_k} + x  \cdot \frac{g_i(r^i_k)}{r^i_k}.$$
Therefore, for every $x\in [\rho^i_{k+1}, r^i_1]$,
\begin{equation}\label{triv}
g_i(x)\le 2x \cdot \frac{g_i(r^i_k)}{r^i_k}.
\end{equation}

Recall the intervals $I_j$ which are covering $A$ infinitely many times. Let
\begin{align*}
\iI^1_k & =\{I : I=I_j \text{ and } \rho^2_k < \diam\,I \le \rho^1_k\},\\
\iI^2_k & =\{I : I=I_j \text{ and } \rho^1_{k+1} < \diam\,I \le \rho^2_k\}.
\end{align*}
We have $c(\rho^i_k)\ge \sum\{\diam\,I: I\in\iI^i_k\}$. Therefore, \eqref{triv} and then \eqref{m2klarge} or \eqref{m1klarge} implies that
\begin{align*}
\sum_{I\in \iI^1_{k}} g(\diam\,I) & \le 2c(\rho^1_k)\frac{g_2(r^2_{k-1})}{r^2_{k-1}}
\le 2 \cdot 2^{-(k-1)} g_2(r^2_{k-1}) \le 2^{-k+2},
\\
\sum_{I\in \iI^2_{k}} g(\diam\,I) & \le 2c(\rho^2_{k})\frac{g_1(r^1_k)}{r^1_k}
\le 2\cdot 2^{-(k-1)}g_1(r^1_k) \le 2^{-k+2}.
\end{align*}
Since $A$ is covered by $$\bigcup_{k=n}^\infty \left(\bigcup \iI^1_k \cup \bigcup \iI^2_{k}\right)$$
for every $n$, and $\sum_{k=n}^\infty 2^{-k+3}\to 0$ as $n\to \infty$, we obtain $\iH^g(A)=0$.
\end{proof}

These two claims conclude the proof of Proposition~\ref{g1g2}.
\end{proof}

We obtain our main theorem as a corollary of Propostition~\ref{g1g2} and Lemma~\ref{felbontas}.

\begin{theorem}\label{paradoxical}
Let $A, B\subset \R$ be Borel (or analytic) sets of zero Lebesgue measure and assume that $B$ is of the second category. Then there are Borel (or analytic) partitions $B=B_1\cup B_2$, $A=A_1\cup A_2$ 
and gauge functions $g_1, g_2$ such that the Hausdorff measures satisfy 
\begin{align*}
\iH^{g_1}(B_1)=1, &\quad \iH^{g_1}(A_1)=0, \\
\iH^{g_2}(B_2)=1, &\quad \iH^{g_2}(A_2)=0.
\end{align*}
\end{theorem}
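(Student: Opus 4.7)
The plan is to apply Proposition~\ref{g1g2} to the Lebesgue-null pair $(A\cup B,\,B)$, which is legitimate since $A\cup B$ is still Lebesgue null and $B$ is of the second category. This produces gauge functions $g_1,g_2$ with $\iH^{g_i}(B)>0$ and, crucially, with $\iH^{\min(g_1,g_2)}$ vanishing on \emph{both} $A$ and $B$ simultaneously. The key additional observation, extracted from the proof of Proposition~\ref{g1g2} rather than from its statement, is that inside $B$ one already constructs compact sets $F^i$ with $0<\iH^{g_i}(F^i)\le 1$; and by running the two balanced-Cantor constructions in disjoint subintervals of the open interval in which $B$ contains a dense $G_\delta$, we may in addition arrange $F^1\cap F^2=\emptyset$.

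With such $F^1,F^2$ in hand, the remainder is a routine application of Lemma~\ref{felbontas}. First I would rescale each $g_i$ by the positive constant $1/\iH^{g_i}(F^i)$, which does not disturb $\iH^{\min(g_1,g_2)}(A\cup B)=0$, so that $\iH^{g_i}(F^i)=1$. Since $\iH^{\min(g_1,g_2)}(A)=0$, Lemma~\ref{felbontas} yields a Borel partition $A=A_1\sqcup A_2$ with $\iH^{g_i}(A_i)=0$; and since $\iH^{\min(g_1,g_2)}(B\setminus(F^1\cup F^2))=0$, a second application yields a Borel partition $B\setminus(F^1\cup F^2)=C_1\sqcup C_2$ with $\iH^{g_i}(C_i)=0$. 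Setting $B_1:=F^1\cup C_1$ and $B_2:=F^2\cup C_2$ gives a Borel partition of $B$, and the additivity of $\iH^{g_i}$ on disjoint Borel sets yields $\iH^{g_i}(B_i)=\iH^{g_i}(F^i)+\iH^{g_i}(C_i)=1$, as required.

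The one step that requires honest verification, and which I view as the main (mild) obstacle, is the disjoint placement of $F^1$ and $F^2$: one must check that the construction in the proof of Proposition~\ref{g1g2} remains valid when the approximating sets $E_m$ are restricted to a fixed half of $(0,1)$. This ought to be routine, since discarding roughly half of the points in each $E_m$ only alters the cardinality estimates for $F^i_k$ by a bounded multiplicative constant, and that constant is absorbed in all subsequent bounds on the natural measure $\mu^i$; no other part of the argument is affected.
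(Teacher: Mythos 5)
Your argument is correct and follows essentially the same route as the paper: apply Proposition~\ref{g1g2} to the pair $(B,\,A\cup B)$, then finish with Lemma~\ref{felbontas}. The only (harmless) deviation is in how the partition of $B$ is finalised: you arrange the compact sets $F^1,F^2$ to be disjoint at construction time and split the remainder $B\setminus(F^1\cup F^2)$ by Lemma~\ref{felbontas}, whereas the paper invokes Lemma~\ref{felbontas2} (whose compact sets $K_i\subset B\setminus B_i'$ are disjoint automatically) and then remarks, just as you do, that the required compact sets are exactly those already built in the proof of Proposition~\ref{g1g2}, so Larman's inner regularity theorem is not actually needed.
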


\begin{proof}
Use Proposition~\ref{g1g2} for the sets $B'=B$ and $A'=A\cup B$. We obtain gauge functions $g_1, g_2$ such that $\iH^{\min(g_1, g_2)}(A\cup B)=0$ and that $\iH^{g_i}(B)>0$.

Applying Lemma~\ref{felbontas} to $A$ gives a Borel (or analytic) partition $A=A_1\cup A_2$ with $\iH^{g_i}(A_i)=0$.

Applying Lemma~\ref{felbontas2} to $B$ gives a Borel (or analytic) partition $B=B_1\cup B_2$ with $0<\iH^{g_i}(B_i)<\infty$. Renormalising $g_i$ we get $\iH^{g_i}(B_i)=1$. 

Notice that in the proof of Proposition~\ref{g1g2} we actually constructed the compact sets which we use in the proof of Lemma~\ref{felbontas2}, therefore the inner regularity property of general Hausdorff measures for analytic sets is not needed.
\end{proof}

\begin{cor}\label{uninoftwo}
Every Borel set $B\subset \R$ of the second category is a union of two disjoint Borel sets which are measured by Hausdorff measures.
\end{cor}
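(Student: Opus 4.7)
The plan is to deduce Corollary \ref{uninoftwo} directly from Theorem \ref{paradoxical} after a short case split according to the Lebesgue measure of $B$. The key remark is that Lebesgue measure on $\R$ coincides with the Hausdorff measure $\iH^g$ for the gauge $g(x)=x$, so any Borel set of positive Lebesgue measure is automatically measured by a Hausdorff measure. The main theorem is doing all the real work, so the argument should be short and essentially a case analysis.

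If $\lambda(B)>0$, I would split $B$ into two disjoint Borel pieces of positive Lebesgue measure. Concretely, one can pick a bounded interval $(a,b)$ with $0<\lambda(B\cap(a,b))<\infty$; the function $c\mapsto\lambda(B\cap(a,c))$ is continuous on $[a,b]$, vanishes at $a$, and is positive at $b$, so the intermediate value theorem supplies some $c_0\in(a,b)$ making both $B\cap(-\infty,c_0)$ and $B\cap[c_0,\infty)$ of positive Lebesgue measure. Lebesgue measure is translation invariant and $\sigma$-finite on $\R$, so each piece is measured by $\iH^g$ with $g(x)=x$.

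If $\lambda(B)=0$, I would simply apply Theorem \ref{paradoxical} with this $B$ and with $A=\emptyset$ (which is vacuously a Borel set of zero Lebesgue measure). The theorem yields a Borel partition $B=B_1\cup B_2$ and gauge functions $g_1,g_2$ such that $\iH^{g_i}(B_i)=1$, so each $B_i$ is measured by the Hausdorff measure $\iH^{g_i}$. There is no substantive obstacle beyond Theorem \ref{paradoxical} itself; one only needs to check that $A=\emptyset$ is a permissible input to the theorem and that the Hausdorff measure with gauge $g(x)=x$ falls under the paper's notion of ``measured by a Hausdorff measure'', both of which are immediate from the definitions.
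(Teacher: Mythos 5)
Your proposal is correct and follows the paper's own argument exactly: the paper also splits into the case $\lambda(B)>0$ (handled by noting Lebesgue measure is a Hausdorff measure) and the case $\lambda(B)=0$ (handled by applying Theorem~\ref{paradoxical} with $A=\emptyset$). Your intermediate-value-theorem bisection just makes explicit the step the paper calls ``obvious.''
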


\begin{proof}
If $B$ has positive Lebesgue measure, then the statement is obvious, as Lebesgue measure is also a Hausdorff measure. Otherwise apply Theorem~\ref{paradoxical} with $A=\emptyset$.
\end{proof}

\bigskip
\section{Sets which cannot be written as a union of measured sets}\label{s2}

\begin{lemma}\label{fubini-mix}
Let $J$ be a countable set. Let $\mu_j$ ($j\in J$) be translation invariant Borel measures on $\R$. Let $K_j\subset [0, a_j]$ be compact sets with $\mu_j(K_j)=1$. Assume that $$\sum_j a_j<\infty.$$ Let $$K=\sum_{j\in J} K_j.$$

Let $B$ be a Borel set containing uncountably many disjoint translates of $K$. Then there are no Borel sets $B_j$ with $B=\cup_j B_j$ where  every $B_j$ has $\sigma$-finite $\mu_j$-measure.
\end{lemma}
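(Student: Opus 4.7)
The plan is to put a natural Borel probability measure $\nu^*$ on the compact set $K$, use a pigeonhole to single out one index $j_* \in J$ whose measure $\mu_{j_*}$ detects positive $\nu^*$-mass on uncountably many translates, and then convert this to uncountably many pairwise disjoint positive $\mu_{j_*}$-measure subsets of $B_{j_*}$, contradicting $\sigma$-finiteness.

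First, since $J$ is countable and $\sum_j a_j < \infty$, the product $\prod_j K_j$ is compact metric and the summation map $\sigma\colon (k_j)_j \mapsto \sum_j k_j$ is continuous (by the uniform tail estimate $\sum_{j \geq N} a_j \to 0$), so $K = \sigma(\prod_j K_j)$ is compact. Setting $\nu_j := \mu_j|_{K_j}$ (each a Borel probability on $K_j$) and $\nu^* := \sigma_*\bigl(\bigotimes_j \nu_j\bigr)$ gives a Borel probability measure concentrated on $K$. Let $T \subset \R$ parametrize the uncountable family of pairwise disjoint translates $\{K + t\}_{t \in T} \subset B$. Each translate $\nu^*_t(\cdot) := \nu^*(\cdot - t)$ is a probability measure supported on $K + t \subset \bigcup_j B_j$, so $\sum_j \nu^*_t(B_j) \geq \nu^*_t(K+t) = 1$ by countable subadditivity. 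Pigeonholing over $j \in J$ and rational thresholds produces an index $j_* \in J$, a constant $c > 0$, and an uncountable $T' \subset T$ with $\nu^*_t(B_{j_*}) \geq c$ for every $t \in T'$.

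Next, Fubini applied to the product measure—separating the factor at index $j_*$—yields the convolution identity $\nu^* = \nu_{j_*} * \lambda_{j_*}$, where $\lambda_{j_*} := \sigma_*\bigl(\bigotimes_{j \neq j_*} \nu_j\bigr)$ is a Borel probability measure on $L_{j_*} := \sum_{j \neq j_*} K_j$. Using translation invariance of $\mu_{j_*}$,
\[
\nu^*_t(B_{j_*}) = \int_{L_{j_*}} \mu_{j_*}\bigl( B_{j_*} \cap (K_{j_*} + t + l) \bigr)\, d\lambda_{j_*}(l).
\]
Since the integrand lies in $[0,1]$ and has $\lambda_{j_*}$-integral at least $c$, a Markov-type bound shows that the set of $l \in L_{j_*}$ with $\mu_{j_*}(B_{j_*} \cap (K_{j_*} + t + l)) \geq c/2$ has positive $\lambda_{j_*}$-measure. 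Pick any such $l(t)$ for each $t \in T'$ and define $D_t := B_{j_*} \cap (K_{j_*} + t + l(t))$. Because $K_{j_*} + t + l(t) \subset K + t$ and the translates $K + t$ are pairwise disjoint, $\{D_t\}_{t \in T'}$ is a pairwise disjoint family of subsets of $B_{j_*}$, each of $\mu_{j_*}$-measure at least $c/2$.

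To conclude, write $B_{j_*}$ as a countable disjoint union of sets of finite $\mu_{j_*}$-measure (possible by $\sigma$-finiteness). Each $D_t$ meets at least one such piece in positive measure, so by pigeonhole a single finite-measure piece is met by uncountably many $D_t$ in positive $\mu_{j_*}$-measure, contradicting the elementary fact that a finite measure space admits at most countably many pairwise disjoint positive-measure subsets. The main obstacle is the Fubini/convolution step: one must justify $\nu^* = \nu_{j_*} * \lambda_{j_*}$ rigorously for the countable product, but this reduces to continuity of $\sigma$, Tychonoff compactness of $\prod_j K_j$, and the standard separation of one factor in a product measure; once this is in place, translation invariance of $\mu_{j_*}$ and disjointness of the translates $K + t$ deliver the contradiction essentially for free.
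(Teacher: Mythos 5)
Your proposal is correct and follows essentially the same route as the paper: push the product measure $\bigotimes_j \mu_j|_{K_j}$ forward under the summation map, pigeonhole over the countable index set to fix one $j_*$ and an uncountable set of translates, apply Fubini to the product (separating the $j_*$ factor) together with translation invariance of $\mu_{j_*}$ to produce uncountably many pairwise disjoint positive-$\mu_{j_*}$-measure subsets of $B_{j_*}$, contradicting $\sigma$-finiteness. The only differences are cosmetic (your quantitative thresholds $c$, $c/2$ via Markov's inequality versus the paper's bare positivity of a single Fubini section).
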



\begin{remark}
It will be shown later that this lemma implies that if a set is ``essentially closed under finite or countably infinite addition'', then it is not a union of finitely many or countably many measured sets.
\end{remark}

The proof is based on the convolution of the measures ${\mu_j}|_{K_j}$ and Fubini's theorem. 

\begin{proof}
Let $\mu$ be the product of the measures ${\mu_j}|_{K_j}$ on the compact product space $\prod_{j\in J} K_j$. Let
\begin{align*}
\pi&:\prod_{j\in J} K_j \to \left[0, \,\sum_j a_j\right]\\
\pi&: (x_j)_{j\in J} \mapsto \sum_{j\in J} x_j.
\end{align*}
This is a continuous map. The image of $\mu$ under $\pi$, $\pi_*(\mu)$, is the convolution of the measures ${\mu_j}|_{K_j}$. Clearly, $\pi_*(\mu)$ is a probability measure supported by the compact set $K$.

Let $T\subset \R$ be uncountable, and assume that the sets $K+t$ ($t\in T$) are disjoint. Assume that $B_j$ are Borel sets with $K+T\subset \cup_{j\in J} B_j$.

For every $t\in T$, $K\subset \cup_j (B_j-t)$. Therefore there is a $j(t)\in J$ such that
$$\pi_*(\mu)(B_{j(t)}-t)>0,$$
that is,
$$\mu(\pi^{-1}(B_{j(t)}-t))>0.$$

Since $J$ is countable and $T$ is uncountable, there is $k\in J$ and an uncountable $T'\subset T$ such that
\begin{equation}\label{pos}
\mu(\pi^{-1}(B_{k}-t))>0
\end{equation}
for every $t\in T'$.

By Fubini's theorem, if $\mu(A)>0$ for a Borel set $A\subset \prod_j K_j$, then there are $x_j\in K_j$ ($j\in J\setminus\{k\}$) such that
$$\mu_k(\{x_k\in K_k \,:\, (x_j)\in A\})>0.$$ 
Using this with \eqref{pos},
$$\mu_k(\{x_k\in K_k \,:\, x_k + u \in B_k-t\})>0$$
and 
$$\mu_k(K_k \cap (B_k-t-u)\})>0$$
where $u\in\R$ is obtained in a form $u=\sum_{j\in J, \ j\neq k} x_j$ with $x_j\in K_j$.
Using that $\mu_k$ is translation invariant and $u\in \sum_{j\in J\setminus\{k\}} K_j$, 
\begin{equation}\label{pos2}
0<\mu_k((K_k+u+ t) \cap B_k) \le \mu_k((K+t)\cap B_k).
\end{equation}
Since the sets $K+t$ ($t\in T'$) are disjoint and $T'$ is uncountable, from \eqref{pos2} we conclude that $B_k$ is not $\sigma$-finite with respect to $\mu_k$.
\end{proof}

\begin{theorem}\label{constr}
Let $2\le n_1 \le n_2 \cdots$ be a sequence of integers tending to infinity. Let 
$$A=\left\{\sum_{i=1}^\infty \frac{k_i}{n_1 \cdots n_i}\,:\,
k_i\in\{0,1,\ldots, n_i-1\} \text{ and }
\sum_{i=1}^\infty \frac{k_i}{n_i} \le 1/2
\right\}.$$
Then $A$ is a non-empty compact set which is not a union of countably many measured sets.
\end{theorem}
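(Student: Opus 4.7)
The plan is to derive a contradiction from Lemma~\ref{fubini-mix}. Assume $A = \bigcup_{j \ge 1} B_j$ with each $B_j$ of positive $\sigma$-finite $\mu_j$-measure for a translation invariant Borel measure $\mu_j$. An atom of such a $\mu_j$ forces every singleton to have equal positive mass, so every $\sigma$-finite $\mu_j$-set is then countable; after absorbing such countable pieces into a single counting-measured set we may assume each $\mu_j$ is atomless. I will produce compact $K_j \subset [0, a_j]$ with $\mu_j(K_j) = 1$, $\sum_j a_j < \infty$, and exhibit uncountably many pairwise disjoint translates of $K := \sum_{j \ge 1} K_j$ inside $A$, contradicting the lemma.

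The essential property of $A$ is \emph{digit-sum subadditivity}: writing $\alpha_x := \sum_i k_i^x/n_i$ for the weighted digit sum of the canonical mixed-radix expansion of $x \in [0,1]$, a carry from position $i$ to $i-1$ replaces a digit $n_i$ (contributing $1$ to $\alpha$) by a digit $1$ at position $i-1$ (contributing $1/n_{i-1} \le 1/2$), so $\alpha_{x+y} \le \alpha_x + \alpha_y$ and iteratively $\alpha_{\sum x_j} \le \sum \alpha_{x_j}$. In particular $\sum x_j \in A$ whenever $\sum \alpha_{x_j} \le 1/2$. Fix $\beta_j = 2^{-j-3}$. By inner regularity pick compact $C_j \subset B_j$ with $0 < \mu_j(C_j) < \infty$; since $\sum_{i > L} k_i^x/n_i \to 0$ pointwise on $A$, monotone convergence yields $L_j$ so large that $\hat C_j := \{x \in C_j : \sum_{i > L_j} k_i^x/n_i \le \beta_j\}$ has positive $\mu_j$-measure and $a_j := 1/(n_1 \cdots n_{L_j}) \le 2^{-j}$. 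By pigeonhole on cylinders of length $L_j$, some $\sigma_j$ satisfies $\mu_j(\hat C_j \cap A_{\sigma_j}) > 0$; set $K_j := (\hat C_j \cap A_{\sigma_j}) - c_{\sigma_j} \subset [0, a_j]$, where $c_{\sigma_j}$ is the base of the cylinder. Translation invariance gives $\mu_j(K_j) > 0$, and we normalize to $\mu_j(K_j) = 1$. Elements of $K_j$ have canonical expansions supported on positions $> L_j$ with digit sum $\le \beta_j$, so by subadditivity $K \subset A$ and every $y \in K$ has $\alpha_y \le \sum \beta_j < 1/4$; hence $t + K \subset A$ for every $t$ in the uncountable set $T^* := \{t \in A : \alpha_t \le 1/4\}$, which contains a Cantor-like set built from binary choices at a sparse sequence $(i_s)$ with $\sum 1/n_{i_s} \le 1/4$.

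The delicate step, and the main obstacle, is to extract an uncountable \emph{pairwise disjoint} subfamily, equivalently an uncountable $T \subset T^*$ with $(T - T) \cap (K - K) = \{0\}$. My plan is to arrange digit-support separation. By iterated pigeonhole, extend each $\sigma_j$ to a longer prefix $\sigma_j'$ of length $L_j'$ still of positive $\mu_j$-measure, choose the blocks $(L_j, L_j']$ pairwise disjoint, and reserve an infinite set $J_0 \subset \mathbb{N}$ of positions outside all of them. Because $n_i \to \infty$, for $L_j$ chosen large we have $n_i \ge 4$ for $i > L_j$, and the bound $\alpha_x \le \beta_j \le 1/8$ forces every individual tail digit of $x \in K_j$ to satisfy $k_i^x \le n_i - 3$ (else that single digit contributes more than $1/2$ to $\alpha_x$). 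At each $i \in J_0 \cap (L_j,\infty)$, select a dominant digit value $v_i^{(j)}$ by pigeonhole, and by a diagonal/probabilistic selection of $J_0$ -- sparse enough that the $n_i$'s along it grow fast -- refine $K_j$ to force $k_i^x = v_i^{(j)}$ at these positions while keeping $\mu_j(K_j) > 0$. Taking $T$ to be the Cantor-like subset of $T^*$ with digits in $\{0,1\}$ supported on $J_0$, the condition $t_i + k_i^x + \text{(carry in)} \le 1 + (n_i - 3) + 1 < n_i$ ensures no carries occur anywhere in $t + x$, so the $J_0$-digits of $t + x$ read off as $t_i + v_i^{(j)}$ and recover $t$ uniquely; hence the translates $t + K$ are pairwise disjoint. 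The principal difficulty is the simultaneous bookkeeping between $\mu_j$-positivity through repeated pigeonhole, fixing the $J_0$-digits of $K_j$, and keeping $J_0$ infinite; the hypothesis $n_i \to \infty$ provides exactly the digit room that makes this feasible.
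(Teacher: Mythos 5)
Your overall skeleton matches the paper's: reduce to Lemma~\ref{fubini-mix}, shrink each $K_j$ into a short interval by truncating the digit expansion at a level $L_j$ and translating a positive-measure cylinder piece to the origin, use subadditivity of the weighted digit sum $\alpha_x=\sum_i k_i(x)/n_i$ under addition to get $K=\sum_j K_j\subset A$ and $T+K\subset A$. The first two paragraphs are essentially sound (the atom reduction is unnecessary but harmless, and $\hat C_j$ is indeed compact of positive measure for $L_j$ large since $x\mapsto\sum_{i>L}k_i(x)/n_i$ is lower semicontinuous on $A$).

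The disjointness step, which you rightly flag as the main obstacle, does not work as planned. You propose to fix the digit $k_i(x)$ of $K_j$ to a single ``dominant'' value $v_i^{(j)}$ at \emph{every} position $i$ of an infinite set $J_0\cap(L_j,\infty)$ while keeping $\mu_j(K_j)>0$. Each such pigeonhole step only guarantees retention of a fraction $1/n_i$ of the measure, and $\prod_{i\in J_0} n_i^{-1}=0$; making the $n_i$ grow fast along $J_0$ makes this worse, not better. There is no adaptive choice of $J_0$ that rescues it: $\mu_j$ restricted to $K_j$ can perfectly well give every digit beyond $L_j$ two values each of conditional probability $1/2$, in which case fixing the digits on \emph{any} infinite set of positions leaves measure zero. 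The paper avoids fixing digits of $K$ altogether: it builds into each $K''_j$ a \emph{multi-scale} uniform tail bound $\sum_{i\ge r^j_m}k_i(x)/n_i\le 4^{-m}$ for all $m$ simultaneously (your construction controls the tail only beyond the single level $L_j$, which gives $\sum_{i\ge m}k_i(x)/n_i\le\sum_j\beta_j$ for $x \in K$, a bound that does not decay as $m\to\infty$). With that uniformity, the separating set $P$ is supported on positions $m(l)$ with one large digit $a(l)\approx n_{m(l)}4^{-l}/4$, and two translates $p+K$, $q+K$ are distinguished not by reading off digits but by comparing the single tail functional $\sum_{i\ge m(l_0)}k_i(x)/n_i$ against the threshold $a(l_0)/n_{m(l_0)}$: it is $<4^{-l_0}/5$ on $p+K$ and $\ge 4^{-l_0}/5$ on $q+K$. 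You would need to replace your digit-fixing scheme by something of this kind (strengthen $\hat C_j$ to control all tails $\sum_{i>r}$ at a prescribed rate, then separate by tail sums) for the proof to go through.
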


The proof is based on Lemma~\ref{fubini-mix} and the following property of $A$. Whenever sets $A_j\subset A$ are given,
there is a translate $B_j$ of a `large part' of $A_j$ such that $\sum B_j \subset A$; moreover, there is a perfect compact set $P$ such that $P+\sum B_j\subset A$ where the translates of $\sum B_j$ are pairwise disjoint.

\begin{proof}
The set $A\subset [0,1]$ is clearly compact and has the cardinality of the continuum.

Assume that $A=\bigcup_{j=1}^\infty A_j$ where each $A_j$ is measured by a translation invariant Borel measure $\mu_j$.
%
As every finite and $\sigma$-finite Borel measure is inner regular, there are compact sets $K_j\subset A_j$ such that $0<\mu_j(K_j)<\infty$.

Every $x\in A$ can be uniquely expressed in the form
$$x=\sum_{i=1}^\infty \frac{k_i(x)}{n_1 \cdots n_i} \qquad
(k_i\in\{0,1,\ldots, n_i-1\}),$$
for which we also have $\sum k_i(x)/n_i \le 1/2$.

Our first aim is to replace each $K_j$ by a large compact subset $K'_j$ on which $\sum k_i(x)/n_i$ is uniformly convergent.
Let $t_m(x)$ denote the smallest positive integer $r$ such that
\begin{equation}\label{e1}
\sum_{i=r}^\infty \frac{k_i(x)}{n_i} \le 4^{-m}.
\end{equation}
Then $t_m:A\to \{1,2, \ldots\}$ is not continuous, but $\{x\in A \,:\, t_m(x)\le r\}$ is compact for every integer $r$.

For each $j$, by an induction argument, we can choose $r^j_1, r^j_2, \ldots$ so large that
the set
\begin{equation}\label{k'j}
K'_j=\{x\in K_j \,:\, \forall m\ge 1 \ t_{m}(x)\le r^j_m\}
\end{equation}
satisfies $\mu_j(K'_j)>0$. (We remark that this uniformity assumption on points of $K'_j$ is the same as requiring that the image of $K'_j$ under the map $x\mapsto (x, \sum k_i(x)/n_i) \in \R^2$ is compact.)


Now we would like to replace $K'_j$ by a large compact set $K''_j$ for which \eqref{e2} holds.

Define $h_r: A\to A$ as the map
$$\sum_{i=1}^\infty \frac{k_i}{n_1\cdots n_i} \mapsto \sum_{i=r}^\infty \frac{k_i}{n_1\cdots n_i}.$$
Let $$K_j''=h_{r^j_j}(K_j').$$ 
That is, we divide $K_j'$ into finitely many compact sets, translate them, and their union is $K_j''$. Therefore, by the translation invariance of $\mu_j$, we have $\mu_j(K''_j)>0$.
By \eqref{e1} and the choice of $r^j_j$, we also have 
\begin{equation}\label{e2}
\sum_{i=1}^\infty k_i(x)/n_i \le 4^{-j} \text{ \ for every }x\in K''_j.
\end{equation}
Since $\sum_j 4^{-j} = 1/3 < 1/2$, we have
$$\sum_{j=1}^\infty K_j'' \subset A$$
and 
\begin{equation}\label{est4}
\sum_{i=1}^\infty k_i(x)/n_i \le \sum_{j=1}^\infty 4^{-j} = 1/3 \text{ \ for every }x\in \sum_{j=1}^\infty K_j''.
\end{equation}
Note also that \eqref{k'j} implies that
\begin{equation}\label{est2}
\sum_{i=r^j_m}^\infty k_i(x)/n_i \le 4^{-m} \text{ \ for every }x\in K''_j \text{ and } m\ge 1.
\end{equation}

Let $$K=\sum_{j=1}^\infty K''_j.$$ We will now find a perfect compact set $P$ such that $P+K\subset A$ where all the translated copies are disjoint.

Fix positive integers $m(1)<m(2)<m(3)<\cdots$ and positive integers $a(l)$ such that
$$m(l)\ge \max(r^1_{4l},r^2_{4l},\ldots,r^{l+2}_{4l})$$
and that
$$4^{-l}/5 \le \frac{a(l)}{n_{m(l)}} \le 4^{-l}/4$$
for $l=1,2, \ldots$.

For $j=1,2,\ldots, l+2$, inequality \eqref{est2} implies that
$$\sum_{i=m(l)}^\infty k_i(x)/n_i \le  4^{-4l} \text{ \ for every }x\in K''_j.$$
For $j\ge l+3$, we can use \eqref{e2} to conclude that for every $x\in K$,
\begin{align}\label{est3}
\sum_{i=m(l)}^\infty k_i(x)/n_i  & \le  (l+2) 4^{-4l} + \sum_{j=l+3}^\infty 4^{-j} \nonumber
\\ & \le 4^{-l} (l+2) 4^{-3l} + 4^{-l}/48  
\\ & \le 4^{-l}/12. \nonumber
\end{align}

Let $$P=\left\{\sum_{l=1}^\infty \frac{b(l)}{n_{m(l)}} \, : \, b(l)\in\{0, a(l)\} \text{ for every } l=1,2,\ldots\right\}.$$
We will show that $P+K\subset A$ and that $(p+K)\cap (q+K) =\emptyset$ whenever $p,q$ are distinct elements from $P$.

Since $$\sum_{l=1}^\infty \frac{a(l)}{n_{m(l)}} \le \sum_{l=1}^\infty 4^{-l}/4 = 1/12,$$
inequality \eqref{est4} implies that $P+K\subset A$, in particular,
\begin{equation*}\label{est5}
\sum_{i=1}^\infty k_i(x)/n_i \le 1/3 +1/12 < 1/2 \text{ \ for every }x\in P+K.
\end{equation*}

Let $p, q\in P$ with $p\neq q$. Let $b_p(l)$ and $b_q(l)$ be the integers corresponding to $p$ and $q$ in the definition of $P$. Let
$$l_0=\min \{l \,:\, b_p(l)\neq b_q(l)\}$$
and assume, without loss of generality, that $b_p(l_0)=0$ and $b_q(l_0)=a(l_0)$.
Then, for every $x\in p+K$, \eqref{est3} implies that
\begin{align*}
\sum_{i=m(l_0)}^\infty k_i(x)/n_i & \le  4^{-l_0}/12 + \sum_{l=l_0+1}^\infty a(l)/n_{m(l)}
\\ & \le 4^{-l_0}/12 + \sum_{l=l_0+1}^\infty 4^{-l-1} 
\\ & \le 4^{-l_0}/12 + 4^{-l_0}/12 < 4^{-l_0}/5.
\end{align*}
On the other hand, for every $x\in q+K$, we clearly have
$$\sum_{i=m(l_0)}^\infty k_i(x)/n_i  \ge a(l_0)/n_{m(l_0)} \ge 4^{-l_0}/5.$$
Therefore $p+K$ and $q+K$ are disjoint sets.

We proved that $A$ contains uncountably many disjoint translates of $K$. Recall that $0<\mu_j(K''_j)<\infty$, and we can renormalise these measures to have $\mu_j(K''_j)=1$. Therefore we can apply Lemma~\ref{fubini-mix}. We obtain that $A$ cannot be written in the form of $\cup_j A_j$, where each $A_j$ has $\sigma$-finite $\mu_j$ measure. This contradicts our initial assumption. So $A$ is not a union of countably many measured sets.
\end{proof}

\bigskip
\section{Measured sets in Polish groups and Banach spaces}\label{s3}

We will consider Borel subgroups $G$ of Polish groups.  We say that a (Borel) measure $\mu$ is translation invariant on $G$ if it is both left and right invariant.
(For non-Abelian groups, assuming only left or right invariance is not enough for the analogue of Lemma~\ref{fubini-mix} to hold.) The notion of measured sets is the same as before: $A\subset G$ is called measured if there is a translation invariant Borel measure on $G$ such that $\mu$ assigns positive and $\sigma$-finite measure to $A$.

Theorem~2.8 in \cite{marcitamas} proves that every (additive) Borel subgroup of $\R$ which is not $F_\sigma$ is not measured. The same proof also gives that if $G$ is a Borel subgroup of a Polish group, and $G$ is not $\sigma$-compact, then $G$ is not measured. Here we prove the following stronger statement.

\begin{theorem}\label{thmpolish} \ 
\begin{enumerate}
\item\label{i2}
Let $G$ be a Polish group. If $G$ is not locally compact, then $G$ is not a union of countably many measured sets.

\item\label{i1}
Let $G$ be a Borel subgroup of a Polish group. If $G$ is not $\sigma$-compact, then $G$ is not a union of finitely many measured sets.

\end{enumerate}
\end{theorem}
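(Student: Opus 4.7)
The plan is to prove both items by generalizing Lemma~\ref{fubini-mix} to bi-invariant Borel measures on a Polish group, then in each case producing uncountably many pairwise disjoint right-translates of a suitable compact set in $G$. In both cases, I assume for contradiction that $G$ is the union $\bigcup_j A_j$ of measured Borel sets (countably many in the first item, finitely many in the second), with bi-invariant Borel witnessing measures $\mu_j$. Inner regularity of $\sigma$-finite Borel measures on Polish spaces yields compact subsets of positive finite $\mu_j$-measure, which after renormalization satisfy $\mu_j(K_j)=1$.

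To generalize Lemma~\ref{fubini-mix}, I fix a left-invariant compatible metric $d$ on the ambient Polish group (Birkhoff--Kakutani). For the countable case I first replace each $K_j$ by a translate of a compact subset lying in $\bar B(e,\eps_j)$ for a preselected summable sequence $\eps_j$: cover $K_j$ by countably many balls $g\,\bar B(e,\eps_j)$ and pick one with positive $\mu_j$-measure, then translate on the left using invariance of $\mu_j$. The multiplication map $\pi\colon\prod_j K_j\to G$, $(x_j)\mapsto x_1x_2\cdots$ is then continuous, because left-invariance of $d$ and the triangle inequality bound the distance between partial products by $\sum_{j>n}\eps_j$. Pushing forward $\bigotimes_j\mu_j|_{K_j}$ by $\pi$ gives a probability measure supported on the compact set $K=K_1K_2\cdots$. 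The proof of Lemma~\ref{fubini-mix} now adapts: from a cover $G=\bigcup_j B_j$ and disjoint right-translates $Kt_i$, one finds $k$ and an uncountable $T'\subset\{t_i\}$ with $\pi_*\mu(B_k t^{-1})>0$ for $t\in T'$; Fubini in the $k$-th coordinate produces $u=x_1\cdots x_{k-1}$, $v=x_{k+1}x_{k+2}\cdots$ and a subset of $K_k$ of positive $\mu_k$-measure that is mapped into $B_k t^{-1}$ by $x_k\mapsto u x_k v$; bi-invariance of $\mu_k$ (used to absorb $u$ on the left and $vt$ on the right) yields $\mu_k(Kt\cap B_k)>0$ for uncountably many $t$, contradicting $\sigma$-finiteness of $B_k$ for $\mu_k$. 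In the second item the index set is finite and $K=K_1\cdots K_n$ is trivially defined, so this step simplifies.

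For the first item it remains to exhibit uncountably many disjoint right-translates of $K$ inside $G$. A non-locally-compact Polish group has no isolated points (a discrete Polish group is countable, hence locally compact) and contains no compact set with nonempty interior (otherwise translating would produce a compact neighborhood of $e$). Hence $K^{-1}K$ is nowhere dense, and since $(s,t)\mapsto st^{-1}$ is continuous and open (multiplication in a topological group is open), its preimage $\{(s,t)\in G\times G:st^{-1}\in K^{-1}K\}$ is nowhere dense. The Kuratowski--Mycielski theorem applied to this meager subset of $G\times G$ produces a perfect set $T\subset G$ on which $st^{-1}\notin K^{-1}K$ for every pair of distinct points, so the translates $\{Kt:t\in T\}$ are pairwise disjoint.

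For the second item, where $G$ may not be Polish, the Kuratowski--Mycielski argument is unavailable, so I invoke Zorn's lemma directly. Let $T\subset G$ be a maximal subset with $st^{-1}\notin K^{-1}K$ for distinct $s,t\in T$. If $T$ were countable, then for each $t\in G$ some $t_i\in T$ would satisfy $tt_i^{-1}\in K^{-1}K$, whence $G=\bigcup_{t_i\in T}(K^{-1}K)t_i$ would be $\sigma$-compact, contradicting the hypothesis. Thus $T$ is uncountable and the finite version of the generalized Fubini-mix argument closes the proof. The main technical obstacle throughout is the non-abelian adaptation of the Fubini step: passing from the $k$-th-coordinate slice back to a positive-measure subset of $Kt\cap B_k$ requires absorbing both the leftward prefix $u$ and the rightward suffix $vt$, which is exactly why the bi-invariance hypothesis on $\mu_j$ is essential.
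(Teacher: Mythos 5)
Your overall architecture is the same as the paper's: a non-abelian, bi-invariant version of Lemma~\ref{fubini-mix} applied to a compact product $K=K_1K_2\cdots$, combined with the existence of uncountably many pairwise disjoint translates of $K$. Your Zorn/maximality argument in part~(2) is exactly the paper's Lemma~\ref{unctrans}, and your Mycielski argument in part~(1) is a correct, if heavier, substitute for it (since a Polish group is locally compact iff it is $\sigma$-compact, the same maximality argument would have served in both parts). The Fubini step, including the way bi-invariance absorbs the prefix $u$ on the left and the suffix $vt$ on the right, is carried out correctly.

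There is, however, a genuine gap in the construction of the infinite product in part~(1). You place each $K_j$ inside $\bar B(e,\eps_j)$ for a left-invariant compatible metric $d$ with $\sum_j \eps_j<\infty$, and conclude that the partial products $p_n=x_1\cdots x_n$ converge because $d(p_n,p_m)\le \sum_{j>n}\eps_j$. This shows only that $(p_n)$ is Cauchy for the \emph{left} uniformity, and a Polish group need not be complete in any left-invariant metric, so left-Cauchy sequences may fail to converge in $G$. Concretely, in the Polish group $S_\infty$ of all permutations of $\N$ with its usual left-invariant metric, taking $x_j$ to be the transposition of $j$ and $j+1$ gives $d(e,x_j)$ summable, yet $x_1x_2\cdots x_n$ converges pointwise to the non-surjective map $k\mapsto k+1$, so the infinite product leaves the group. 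The non-locally compact Polish groups to which part~(1) applies include precisely such groups, so the step as written fails. The repair is what the paper does: fix a \emph{complete} compatible metric $\rho$ and choose the $\eps_j$ \emph{recursively}, using joint continuity of multiplication and compactness of $K'_1\cdots K'_{j-1}$, so that $\rho(g,gh)<2^{-j}$ for all $g\in K'_1\cdots K'_{j-1}$ and $h\in \bar B(e,\eps_j)$; then the partial products are uniformly $\rho$-Cauchy, $\pi$ is a uniform limit of continuous maps and hence continuous, and $K=\pi\bigl(\prod_j K'_j\bigr)$ is compact. (A preselected summable sequence $\eps_j$ does suffice when the metric can be taken two-sided invariant, e.g.\ in the abelian and Banach-space cases, which is exactly the paper's parenthetical remark.) Part~(2), which needs only finite products, is unaffected by this issue.
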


We remark that a Polish group is $\sigma$-compact if and only if it is locally compact. (Any separable and locally compact metric space is $\sigma$-compact. For the other direction, Baire category theorem implies that in every countable covering of $G$ with compact sets there is one with non-empty interior.)

To prove the theorem, we will use an analogue of Lemma~\ref{fubini-mix} in this group setting. To study infinite products, we need the completeness of the space, so the lemma is as follows.

\begin{lemma}\label{fubini-mix2}
Let $J$ be a finite set and $G$ be a Borel subgroup of a Polish group; or let $J$ be countable and $G$ be a Polish group.

Let $\mu_j$ ($j\in J$) be translation invariant Borel measures. Let $K_j\subset G$ be compact sets with $0<\mu_j(K_j)<\infty$.
If $J$ is infinite, assume that the infinite product
$$K=\prod_{j\in J} K_j \subset G$$ 
(in some order) exists, giving a compact set $K\subset G$.
Let $B$ be a Borel set containing uncountably many disjoint translates of $K$. Then there are no Borel sets $B_j$ with $B=\cup_j B_j$ where  every $B_j$ has $\sigma$-finite $\mu_j$-measure.
\end{lemma}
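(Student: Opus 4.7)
The plan is to run a direct non-commutative analogue of the proof of Lemma~\ref{fubini-mix}, replacing the sum map by the group product map and one-sided invariance by the assumed bi-invariance of each $\mu_j$. After normalising so that $\mu_j(K_j)=1$, I would let $\mu$ be the product probability measure on the compact product space $\prod_{j\in J} K_j$, and let $\pi:\prod_{j\in J} K_j \to G$ be the continuous map sending $(x_j)_{j\in J}$ to the ordered product. For finite $J$ this is automatic; for countable $J$ the hypothesis that $K=\prod_j K_j$ exists as a compact subset of $G$ is precisely what guarantees uniform convergence of the partial products and hence continuity of $\pi$. In either case $\pi_*\mu$ is a Borel probability measure supported on $K$.

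Next I would argue by contradiction: suppose $B=\bigcup_{j\in J} B_j$ is a Borel covering with each $B_j$ $\sigma$-finite for $\mu_j$, and let $T\subset G$ be uncountable with the translates $tK$ ($t\in T$) pairwise disjoint and contained in $B$. For every $t\in T$, the inclusion $K\subset \bigcup_j t^{-1}B_j$ forces $\pi_*\mu(t^{-1}B_{j(t)})>0$ for some $j(t)\in J$, so pigeonhole yields $k\in J$ and an uncountable $T'\subset T$ with $\mu(\pi^{-1}(t^{-1}B_k))>0$ for every $t\in T'$.

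Splitting $\mu$ as $\mu_k|_{K_k}\otimes \prod_{j\neq k}\mu_j|_{K_j}$ and applying Fubini, for each such $t$ I can extract an element $(x_j)_{j\neq k}\in\prod_{j\neq k} K_j$ so that the slice
$$\bigl\{x_k\in K_k : \pi((x_j)_{j\in J})\in t^{-1}B_k\bigr\}$$
has positive $\mu_k$-measure. Writing $\pi((x_j)_{j\in J})=u_L\, x_k\, u_R$ with $u_L$ and $u_R$ the ordered products of the fixed $x_j$'s with $j<k$ and $j>k$ respectively, bi-invariance of $\mu_k$ identifies this slice with $u_L K_k u_R\cap t^{-1}B_k$ via a $\mu_k$-preserving bijection of $G$, and $u_L K_k u_R\subset K$. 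Hence
$$0<\mu_k\bigl(u_L K_k u_R\cap t^{-1}B_k\bigr)\le \mu_k\bigl(K\cap t^{-1}B_k\bigr)=\mu_k(tK\cap B_k),$$
using translation invariance one final time. As $T'$ is uncountable and the sets $tK$ are pairwise disjoint, $B_k$ would contain uncountably many pairwise disjoint Borel subsets of positive $\mu_k$-measure, contradicting $\sigma$-finiteness.

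The one genuinely new point compared with the abelian argument is the infinite-product setting: one must exploit the assumed compactness of $K$ in $G$ to get a well-defined continuous product map $\pi$ (equivalently, uniform convergence of partial products), so that the product probability on $\prod_j K_j$ is well behaved and Fubini applies. Everything else is a careful but routine non-commutative rewriting of the proof of Lemma~\ref{fubini-mix}, with the two-sided invariance of the $\mu_j$ replacing one-sided invariance at precisely the step where one pushes forward by the coordinate transformation $x_k\mapsto u_L x_k u_R$.
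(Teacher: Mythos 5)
Your proposal is correct and is essentially the paper's own argument: the paper proves Lemma~\ref{fubini-mix2} simply by asserting that the proof of Lemma~\ref{fubini-mix} goes through in the non-Abelian setting, and your write-up carries out exactly that adaptation, correctly identifying the two places where something changes (continuity/measurability of the ordered product map $\pi$ in the infinite case, and the use of two-sided invariance to push the slice $x_k\mapsto u_L x_k u_R$ into $K$). No gaps.
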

\begin{proof}
The proof is essentially the same as the proof of Lemma~\ref{fubini-mix}. It is easy to check that the earlier proof works even if the group is not Abelian.
\end{proof}

The following lemma and its proof are essentially contained by \cite[Theorem 2.8]{marcitamas} (there $G$ is a subgroup of $\R$).
\begin{lemma}\label{unctrans}
Assume that $G$ is a topological group which is not $\sigma$-compact. Then every compact set $K\subset G$ has uncountably many disjoint translates in $G$.
\end{lemma}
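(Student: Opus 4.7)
The plan is to argue by transfinite recursion, using the fact that $\sigma$-compact unions cannot exhaust $G$. The key observation is a standard algebraic reformulation of disjointness of translates: for any two elements $g,h \in G$, the left translates $gK$ and $hK$ meet if and only if $g^{-1}h \in KK^{-1}$, where $KK^{-1} = \{ab^{-1} : a,b\in K\}$. Being the continuous image of $K\times K$ under the group operation $(a,b)\mapsto ab^{-1}$, the set $KK^{-1}$ is compact; it is also symmetric, since $(ab^{-1})^{-1} = ba^{-1}$ again lies in $KK^{-1}$.

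Using this, I would build a family of $\aleph_1$ elements $(g_\alpha)_{\alpha<\omega_1}$ of $G$ by transfinite recursion such that $g_\alpha^{-1} g_\beta \notin KK^{-1}$ for all $\alpha \neq \beta$. Pick $g_0$ arbitrarily. Suppose $g_\alpha$ is chosen for every $\alpha<\beta$ for some countable ordinal $\beta$. The set
$$S_\beta \,=\, \bigcup_{\alpha<\beta} g_\alpha\cdot KK^{-1}$$
is a countable union of compact sets, hence $\sigma$-compact. Since $G$ itself is not $\sigma$-compact, we have $S_\beta \subsetneq G$, so we may choose $g_\beta \in G\setminus S_\beta$. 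By construction $g_\beta\notin g_\alpha KK^{-1}$, i.e.\ $g_\alpha^{-1}g_\beta\notin KK^{-1}$, for every $\alpha<\beta$; and by symmetry of $KK^{-1}$ also $g_\beta^{-1}g_\alpha = (g_\alpha^{-1}g_\beta)^{-1}\notin KK^{-1}$. Hence the translates $g_\alpha K$ ($\alpha<\omega_1$) are pairwise disjoint, giving uncountably many disjoint translates of $K$.

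There is no real obstacle here beyond keeping the algebra straight in the non-Abelian setting; the proof works verbatim for right translates by the symmetric argument with $K^{-1}K$. The only point worth checking is that $\sigma$-compactness of $G$ is genuinely what is being negated: any countable union of compacta $g_\alpha KK^{-1}$ is $\sigma$-compact, so assuming it equals $G$ would contradict the hypothesis.
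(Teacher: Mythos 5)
Your proof is correct and follows essentially the same route as the paper: a transfinite recursion of length $\omega_1$, choosing each new element outside the $\sigma$-compact set $\bigcup_{\alpha<\beta} g_\alpha KK^{-1}$, using that $KK^{-1}$ is compact and that $G$ is not $\sigma$-compact. The explicit remark about the symmetry of $KK^{-1}$ is a minor addition the paper leaves implicit; otherwise the arguments coincide.
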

\begin{proof}
We define a transfinite sequence of points $\{t_\alpha \in G : \alpha < \omega_1\}$ by transfinite induction so that the sets $\{t_\alpha K: \alpha < \omega_1\}$ are pairwise disjoint. Clearly,
$$t_\alpha K \cap t_\beta K =\emptyset \text{ \ if and only if \ }t_\alpha \notin t_\beta K K^{-1}.$$ Therefore, at step $\alpha$ our task is to find $t_\alpha\in G$ such that $t_\alpha\notin t_\beta K K^{-1}$ for any $\beta<\alpha$. Since $K K^{-1}$ is compact, the set $\cup_{\beta<\alpha} t_\beta K K^{-1}$ is $\sigma$-compact and cannot cover $G$. So the induction works. 
\end{proof}

\begin{proof}[Proof of Theorem~\ref{thmpolish}]
First let us note that in every Polish space, every finite Borel measure is inner regular (that is, the measure of every Borel set can be approximated using compact subsets), see \cite[Theorem 7.1.7]{bogachev}. The same holds for measures defined on any Borel subset of any Polish space. 

To prove part \eqref{i1},
let $G$ be a not $\sigma$-compact Borel subgroup of a Polish group.
Assume that $G=\cup_{j=1}^n A_j$, where the sets $A_j$ are Borel and there are translation invariant Borel measures $\mu_j$ such that $0<\mu_j(A_j)$ and $\mu_j$ is $\sigma$-finite on $A_j$. Choose compact sets $K_j\subset A_j$ such that $0<\mu_j(K_j)<\infty$. Let $K=\prod_{j=1}^n K_j$. This is a compact set in $G$. Combining Lemma~\ref{unctrans} with Lemma~\ref{fubini-mix2} for $B=G$ gives that our initial assumption is false. This proves that $G$ is not a union of finitely many measured sets.

To prove part \eqref{i2}, let $G$ be a not $\sigma$-compact (not locally compact) Polish group. Assume that $G=\cup_{j=1}^\infty A_j$, where the sets $A_j$ are Borel and there are translation invariant Borel measures $\mu_j$ such that $0<\mu_j(A_j)$ and $\mu_j$ is $\sigma$-finite on $A_j$. Choose compact sets $K_j\subset A_j$ such that $0<\mu_j(K_j)<\infty$. It is easy to see that for each $j$, there is $x_j\in K_j$ such that the intersection of $K_j$ with any neighbourhood of $x_j$ has positive $\mu_j$ measure. We define $K'_j$ by translating $K_j$ by $x_j^{-1}$ and intersecting it with a small neighbourhood of the identity,
$$K'_j=K_j x_j^{-1} \cap B(1, \eps_j).$$
(Here the distance is the metric realising that $G$ is a Polish group.)
Then $\mu_j(K'_j)>0$ and the infinite group multiplication
$$K=\prod_{j=1}^\infty K'_j$$
makes sense and defines a compact set $K\subset G$ provided that $\eps_j\to 0$ sufficiently fast. (Note that $\eps_j$ might depend on $K'_1 K'_2 \cdots K'_{j-1}$. If the metric is translation invariant, we can take any $(\eps_j)$ of finite sum.)  We again use Lemma~\ref{unctrans} and Lemma~\ref{fubini-mix2} to obtain a contradiction.
\end{proof}

\begin{cor}\label{cor1}
Let $X$ be an infinite dimensional separable Banach space. Then $X$ is not a union of countably many measured sets. 
Moreover, every closed not locally compact subgroup of $X$ has the same property.  
\end{cor}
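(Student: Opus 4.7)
The plan is to derive both statements as immediate consequences of Theorem~\ref{thmpolish}\eqref{i2}. First observe that any separable Banach space $X$, with the metric induced by its norm and the group operation given by vector addition, is a Polish group: it is a complete separable metric space and the addition and inversion operations are continuous. A closed subgroup of a Polish group is again Polish in its subspace topology (closed subsets of Polish spaces are Polish, and the group operations restrict continuously), so both $X$ itself and any closed subgroup $H\le X$ fall within the scope of Theorem~\ref{thmpolish}.

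For the first claim, I would invoke F.~Riesz's classical theorem: a normed space is locally compact if and only if it is finite dimensional. Since $X$ is assumed infinite dimensional, it is not locally compact, and Theorem~\ref{thmpolish}\eqref{i2} gives the conclusion. The moreover part is even more direct: if $H\le X$ is a closed subgroup which is not locally compact, then $H$ is a not locally compact Polish group in its own right, so Theorem~\ref{thmpolish}\eqref{i2} applied intrinsically to $H$ yields the statement.

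I expect no serious technical obstacle here. The only conceptual point to verify — and the closest thing to an obstacle — is that the notion of a measured subset of $H$ used in the corollary agrees with the notion of Theorem~\ref{thmpolish} applied to $H$ viewed as its own ambient Polish group. This is immediate because the definition only refers to translation invariant Borel measures on the ambient group, which in both cases can be taken to be $H$ itself (a Borel measure on $H$ that is translation invariant under $H$ is exactly what both notions require). Hence the corollary reduces to verifying non-local-compactness, which in turn follows from Riesz's theorem (for $X$) or is assumed (for $H$).
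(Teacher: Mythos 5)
Your proposal is correct and follows essentially the same route as the paper, which simply notes that every closed subgroup of $X$ is a Polish group and invokes Theorem~\ref{thmpolish}; your explicit appeal to Riesz's theorem for the non-local-compactness of an infinite dimensional normed space just makes the paper's implicit step visible.
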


\begin{proof}
Every closed subgroup of $X$ is a Polish group (using the same metric). Theorem~\ref{thmpolish} implies the statement.
\end{proof}

Based on Corollary~\ref{cor1}, now we construct Borel and $\sigma$-compact additive subgroups of $\R$ which are not a union of countably measured sets.

We write $$X_r=\{x\in X : \norm{x}\le r\}$$
for the closed ball in the Banach space $X$ of radius $r$ centred at $0$.

\begin{theorem}\label{banachconstr}
Let $X$ be an infinite dimensional Banach space with Schauder basis $(e_i)$. Assume $\norm{e_i}=1$.
Let $2\le n_1 \le n_2 \le \cdots \to \infty$.
Define
\begin{align*} 
A=\Bigg\{ \sum_{i=1}^\infty \frac{k_i}{n_1 \cdots n_i} \in \R\,: & \ k_i\in\Z\text{ and }
 \sum_{i=1}^\infty \frac{k_i}{n_i}e_i \in X\Bigg\}
\end{align*}
where by $\sum (k_i/n_i)e_i\in X$ we mean that the sum converges in $X$; then the sum $\sum_i k_i/(n_1\cdots n_i)$ converges automatically.

Then $A$ is a Borel additive subgroup of $\R$ and is not a union of countably many measured sets.

If the Schauder basis is boundedly complete, then $A$ is $\sigma$-compact.

\end{theorem}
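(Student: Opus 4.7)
The plan is to package $A$ as the image of the continuous Lipschitz group homomorphism $\phi:\tilde Y\to\R$ defined by $\phi\bigl(\sum(k_i/n_i)e_i\bigr)=\sum k_i/(n_1\cdots n_i)$, where
$$\tilde Y=\bigl\{y\in X:\text{every Schauder coordinate of }y\text{ lies in }(1/n_i)\Z\bigr\}$$
is closed in $X$ (hence a Polish group) because the coordinate functionals are continuous. The Schauder basis constant $K$ gives $|k_i|\le 2Kn_i\|y\|$, which combined with $\sum 1/(n_1\cdots n_{i-1})<\infty$ (since $n_i\ge 2$) yields $|\phi(y)|\le C\|y\|$ for a constant $C$. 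Hence $A=\phi(\tilde Y)$ is an additive subgroup of $\R$, and Borel by the Lusin--Suslin theorem since $\phi$ factors through the injective continuous homomorphism $\tilde Y/\ker\phi\to\R$ from the Polish quotient.

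For the main claim I parallel the proof of Theorem~\ref{constr}, with $\|\cdot\|_X$ taking the role of the real tail $\sum k_i/n_i$. Suppose toward a contradiction that $A=\bigcup_j A_j$ with $A_j$ measured by $\mu_j$. Using $A=\bigcup_M\phi(\tilde Y^{(M)})$ for $\tilde Y^{(M)}=\{y\in\tilde Y:\|y\|\le M\}$, I pick compact $K_j\subset A_j\cap\phi(\tilde Y^{(M_j)})$ with $0<\mu_j(K_j)<\infty$ and obtain a continuous section $s_j:K_j\to\tilde Y^{(M_j)}$ from Kuratowski--Ryll-Nardzewski plus Lusin's theorem. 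Setting
$$\tau_m(\tilde y)=\min\bigl\{r:\|\textstyle\sum_{i\ge r}(k_i/n_i)e_i\|_X\le 4^{-m}\bigr\}$$
as the Banach replacement for $t_m$, an inductive choice of $r_m^j$ refines $s_j(K_j)$ to a compact subset on which $\tau_m\le r_m^j$ and whose $\phi$-image still has positive $\mu_j$-mass. Translating by the (finitely many) possible values of the first $r_j^j-1$ coefficients on this bounded set produces a compact $\tilde K''_j\subset\tilde Y$ supported on basis indices $\ge r_j^j$ with $\|\tilde K''_j\|_X\le 4^{-j}$, and $L_j=\phi(\tilde K''_j)$ is a compact subset of $A$ of positive $\mu_j$-measure.

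The Banach-space heart of the proof is that $\sum\|\tilde y_j\|\le\sum 4^{-j}<\infty$ makes $\tilde K=\sum_j\tilde K''_j$ a well-defined compact subset of the closed subgroup $\tilde Y$, whence $\phi$ continuously maps it to a compact $K=\sum_j L_j\subset A$ with summable $\R$-diameters $b_j$. After centring each $L_j$ inside $[0,b_j]$, it remains to produce uncountably many disjoint translates of $K$ in $A$ for Lemma~\ref{fubini-mix}. This is the main obstacle because $\phi$ is typically not injective, so one cannot simply transfer disjointness from $\tilde Y$ to $\R$. Following Theorem~\ref{constr} I introduce a Cantor set
$$\tilde P=\Bigl\{\textstyle\sum_l\alpha_l(a_l/n_{m(l)})e_{m(l)}:\alpha_l\in\{0,1\}\Bigr\}\subset\tilde Y$$
with scales $m(l)$ chosen so that every $\tilde K''_j$ with $j\le l+2$ has $\|\cdot\|_X$-contribution at most $4^{-4l}$ beyond index $m(l)$, and integer weights $a_l$ satisfying $a_l/n_{m(l)}\approx 4^{-l}/4$. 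Adapting Theorem~\ref{constr}'s tail argument with $\|\cdot\|_X$-norms in place of $\sum k_i/n_i$-sums, at the first disagreement index $l_0$ between $\tilde p\ne\tilde q$ in $\tilde P$ the coefficient at position $m(l_0)$ of any lift of an element of $\phi(\tilde p)+K$ differs by at least $a_{l_0}/(2n_{m(l_0)})$ from that of any lift of an element of $\phi(\tilde q)+K$, because both the contribution of $\tilde K-\tilde K$ at this scale and the coefficient at $m(l_0)$ of any $\ker\phi$-adjustment (which is itself controlled by the basis-constant bound applied to the compensating coordinates dictated by $\phi=0$) are too small. Hence $\phi(\tilde p)+K$ and $\phi(\tilde q)+K$ are disjoint in $\R$, providing the required continuum many disjoint translates and triggering Lemma~\ref{fubini-mix}.

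Finally, when $(e_i)$ is boundedly complete each $\phi(\tilde Y^{(M)})$ is compact, so $A=\bigcup_M\phi(\tilde Y^{(M)})$ is $\sigma$-compact. Boundedness is immediate from $|\phi(y)|\le CM$. For closedness, a convergent sequence $a^{(m)}\in\phi(\tilde Y^{(M)})$ lifts to $y^{(m)}\in\tilde Y^{(M)}$ whose integer coefficients $k_i^{(m)}$ are bounded by $2KMn_i$; a diagonal extraction stabilises each $k_i^{(m)}\to k_i\in\Z$, the partial sums of $\sum(k_i/n_i)e_i$ inherit the bound $M$ as coordinate-wise limits of partial sums of $y^{(m)}$, so bounded completeness delivers $y\in\tilde Y^{(M)}$, and the uniform tail bound $|\sum_{i>N}k_i^{(m)}/(n_1\cdots n_i)|\le 2KM\sum_{i>N}1/(n_1\cdots n_{i-1})\to 0$ forces $\phi(y)=a$.
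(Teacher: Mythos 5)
Your route is genuinely different from the paper's: the paper does not redo the Fubini/disjoint-translates argument of Theorem~\ref{constr} in $\R$ at all. Instead it fixes $r<1/(3C)$ ($C$ the basis-constant bound, so that every coordinate satisfies $\abs{k_i/n_i}<1/3$ and Remark~\ref{rem1} makes the digit representation unique), observes that $f_r\colon G_r\to A_r$ is then a continuous \emph{bijection}, pushes the hypothetical measures $\mu_j$ forward to measures on $G_r$ that are ``translation invariant inside $G_r$'', extends them to genuinely translation invariant measures on the closed non-locally-compact subgroup $G\subset X$, and invokes Theorem~\ref{thmpolish}/Corollary~\ref{cor1}. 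Your plan bypasses the Polish-group theorem and works directly in $\R$, which is a legitimate alternative, but it forces you to confront the non-injectivity of $\phi$ by hand, and that is where your argument has a real gap.

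The gap is in the disjointness step. To get $\bigl(\phi(\tilde p)+K\bigr)\cap\bigl(\phi(\tilde q)+K\bigr)=\emptyset$ you must rule out $w:=\tilde p-\tilde q+\tilde k-\tilde k'\in\ker\phi$ for all $\tilde k,\tilde k'\in\tilde K$. Your justification --- that the coefficient at $m(l_0)$ of ``any $\ker\phi$-adjustment'' is small by the basis-constant bound --- is not valid at the scales you chose: $\ker\phi$ contains nonzero elements with coordinates of size $1$ (e.g.\ $(1/n_1)e_1-e_2$, since $1/n_1-n_2/(n_1n_2)=0$), and the only way to exclude them is to force \emph{every} coordinate of $w$ into the uniqueness range $\abs{c_i/n_i}<1/3$ of Remark~\ref{rem1}, which by the coefficient estimate $\abs{c_i/n_i}\le C\norm{w}$ requires $\norm{w}<1/(3C)$. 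With your bounds ($\norm{\tilde K''_j}\le 4^{-j}$ summing to $1/3$, $\norm{\tilde P}\le 1/12$) you only get $\norm{w}\le 5/6$, so nothing prevents $w$ from being a nonzero kernel element and the translates from colliding in $\R$. (A similar constant problem occurs in your claim that the $m(l_0)$-coordinate of $\tilde k-\tilde k'$ is below $a_{l_0}/(2n_{m(l_0)})$: the estimate picks up a factor of $C$ that your choice $a_l/n_{m(l)}\approx 4^{-l}/4$ does not absorb.) Both defects are repairable by rescaling the entire construction --- replace $4^{-j}$ and $a_l/n_{m(l)}\approx 4^{-l}/4$ by $\eps\,4^{-j}$ and $\approx\eps\,4^{-l}$ with $\eps$ small compared to $1/C$, so that $\norm{w}<1/(3C)$ and uniqueness of representation kills the kernel --- but as written the central step is not justified. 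The remaining ingredients (Borelness via the Polish quotient and Lusin--Suslin, the measurable-section argument, and the bounded-completeness proof of $\sigma$-compactness, modulo the basis constant $C'$ you should carry through in the bound on partial sums) are sound.
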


\begin{remark}
For every $1\le p<\infty$, the space $\ell_p$ (with the standard basis) is boundedly complete, therefore the obtained $A$ is $\sigma$-compact and not a union of countably many measured sets.
\end{remark}
\begin{remark}\label{remAr}
For every $r>0$, let
\begin{align*} 
A_r=\Bigg\{ \sum_{i=1}^\infty \frac{k_i}{n_1 \cdots n_i} \in \R\,: & \ k_i\in\Z\text{ and }
 \sum_{i=1}^\infty \frac{k_i}{n_i}e_i \in X_r\Bigg\}
\end{align*}
where by $\sum (k_i/n_i)e_i\in X_r$ we mean that the sum converges in $X$ to a point in $X_r$.
Then $A_r$ is not a union of countably many measured sets. Also, the closure of $A_r$ is in $A$. For $\ell_p$ spaces with the standard basis (and whenever the basis is monotone and boundedly complete), $A_r$ is compact. (See the proof of Theorem~\ref{banachconstr} for proofs.)
\end{remark}

\begin{remark}\label{rem1}
In the set
\begin{align*} 
E=\Bigg\{ \sum_{i=1}^\infty \frac{k_i}{n_1 \cdots n_i} \in \R\,: & \ k_i\in\{-m_i, \ldots, m_i\}\Bigg\}
\end{align*}
every point has a unique representation of the form $\sum_{i=1}^\infty k_i/(n_1 \cdots n_i)$ if $2m_i+1<n_i$. If $n_i=2m_i+1$, then $E=[-1/2, 1/2]$.
\end{remark}

\begin{proof}[Proof of Theorem~\ref{banachconstr}]
As $(e_i)$ is a Schauder basis, every $v\in X$ has a unique representation as
$$v=\sum_{i=1}^\infty x_i e_i \quad (x_i\in \R).$$
It is well known that there is a constant $C>0$ (depending on the basis only) for which $\abs{x_i}\le C\norm{v}$, for every $i$ and $v\in X$.

Therefore $\sum_i (k_i/n_i)e_i\in X_r$ implies that $\abs{k_i/n_i}\le Cr$, and thus $\sum_i k_i/(n_1\cdots n_i)$ indeed always converges.

Assume $r<1/(3C)$. Then $\sum (k_i/n_i)e_i \in X_r$ implies that $\abs{k_i/n_i} < 1/3$ and therefore $k_i\in\{-m_i, \ldots, m_i\}$ with $m_i=\lceil n_i/3 \rceil-1$, so $k_i$ can take only less than $n_i$ different values. Remark~\ref{rem1} implies that in this case every point in $A_r$ (defined in Remark~\ref{remAr}) has a unique representation of the form
$$x=\sum_{i=1}^\infty \frac{k_i}{n_1\cdots n_i} \quad (k_i\in\{-m_i, \ldots, m_i\}).$$

Let 
$$G=\left\{\sum_{i=1}^\infty \frac{k_i}{n_i}e_i \in X\,:\, k_i\in \Z\right\}$$
and for $r>0$,
$$G_r=\left\{\sum_{i=1}^\infty \frac{k_i}{n_i}e_i \in X_r\,:\, k_i\in \Z\right\}.$$
As $(e_i)$ is a basis, $G$ is weakly closed, and thus closed. In fact, $G$ is a closed subgroup of $X$; it is the closure of the subgroup generated by the vectors $e_i/n_i$. 
On the other hand, $G$ is not locally compact. Indeed, for any $h>0$ there are integers $k_i$ such that $\norm{(k_i/n_i)e_i}\to h$. Any subsequence of $(k_i/n_i)e_i$ either converges weakly to $0$ or it does not even converge weakly, but it does not converge in norm to $0$.

The set $G_r=G\cap X_r$ is also closed. 

Let $f_r:G_r\to A_r$ be the map for which
$$f_r\left( \sum_{i=1}^\infty\frac{k_i}{n_i}e_i
  \right) = \sum_{i=1}^\infty \frac{k_i}{n_1\cdots n_i}.$$
This is well defined, and previous arguments imply that this is a bijection if $r<1/(3C)$. As $(e_i)$ is a Schauder basis, $f_r$ is continuous. An injective continuous image (of a closed subspace) of a Polish space is Borel, hence $A_r$ is Borel and $f_r^{-1}$ is Borel when $r<1/(3C)$. Notice that as $G$ is separable, it is a union of countably many translates of $G_r$. Since $A$ is a linear image of $G$, it is an additive subgroup of $\R$, and $A$ is a union of countably many translates of $A_r$. Therefore $A$ is Borel.

\begin{claim}\label{claimcompact}
If $(e_i)$ is boundedly complete, then the closure of $A_r$ is in $A_{C'r}$, hence $A$ is $\sigma$-compact.

If $(e_i)$ is the standard basis of $\ell_p$ (in general, a monotone and boundedly complete basis), then $C'=1$ and $A_r$ is compact.
\end{claim}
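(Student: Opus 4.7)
The plan is to unpack the closure $\overline{A_r}$ via sequences, using two features of the Schauder basis $(e_i)$: the coefficient bound (every coordinate of $v\in X$ has absolute value at most $C\|v\|$) and bounded completeness. I would start with an arbitrary sequence $y^{(m)}\in A_r$ converging in $\R$ to some $y$, writing $y^{(m)}=\sum_i k_i^{(m)}/(n_1\cdots n_i)$ with $v^{(m)}:=\sum_i (k_i^{(m)}/n_i)e_i\in X_r$. Then $|k_i^{(m)}/n_i|\le Cr$, so for each $i$ the integer $k_i^{(m)}$ takes only finitely many values, and a diagonal extraction produces a subsequence along which $k_i^{(m)}$ stabilises to an integer $k_i$ for every $i$.

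Next I would identify the limit and recover a Banach-space series. Since $n_i\ge 2$, the tails $|\sum_{i>N} k_i^{(m)}/(n_1\cdots n_i)|$ are uniformly small in $m$, so exchanging limits yields $y=\sum_i k_i/(n_1\cdots n_i)$. Then comes the decisive step: for each $N$ and all sufficiently large $m$, the partial sum $\sum_{i=1}^N (k_i/n_i)e_i$ equals the $N$-th partial sum of $v^{(m)}$, hence has norm at most $K\|v^{(m)}\|\le Kr$, where $K$ is the basis constant. Bounded completeness upgrades this uniform bound into convergence of $\sum_i (k_i/n_i)e_i$ to some $v\in X$ with $\|v\|\le Kr$, whence $y\in A_{Kr}$ and $\overline{A_r}\subset A_{C'r}$ with $C'=K$.

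For the $\sigma$-compactness, the set $A_{C'r}$ is bounded in $\R$ (the coefficient bound forces $|y|\le 2CC'r$ for $y\in A_{C'r}$), so $\overline{A_r}$ is compact. Since $G$ is separable and $A$ is the image of $G$ under an additive Lipschitz map (Lipschitz because $|f(v)|\le 2C\|v\|$ on $G$), $A$ is a countable union of translates $A_r+t_n$, and each of these sits inside $\overline{A_r}+t_n\subset A$; so $A=\bigcup_n (\overline{A_r}+t_n)$ is $\sigma$-compact. For the monotone case the basis constant equals $1$, hence $C'=1$ and $\overline{A_r}\subset A_r$; combined with boundedness this makes $A_r$ compact.

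The main obstacle is the interplay between the two series. The real-valued series $\sum k_i/(n_1\cdots n_i)$ converges automatically from the growth bound on the $k_i$, but only bounded completeness of $(e_i)$ forces the parallel Banach-space series $\sum (k_i/n_i)e_i$ to converge; one must carefully track the norm bound under the limiting procedure, since it is exactly this bound that quantifies the containment $\overline{A_r}\subset A_{C'r}$ and makes the value $C'=K$ (and $C'=1$ in the monotone case) visible.
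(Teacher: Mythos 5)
Your proposal is correct and follows essentially the same route as the paper: a diagonal extraction stabilising the integer coefficients (using $|k_i/n_i|\le Cr$), the basis-constant bound on partial sums, bounded completeness to produce the limit $\sum (k_i/n_i)e_i\in X_{C'r}$, and the identification $C'=K$ (equal to $1$ for a monotone basis). The only difference is cosmetic: you separate the closure containment from the compactness (via boundedness of $A_r$ in $\R$), whereas the paper phrases the argument as extracting a convergent subsequence with limit in $A_{C'r}$.
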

\begin{proof}

Let $x_j\in A_r$ ($j=1,2,\ldots$). Then there are integers $k_i^j$ with 
$$\sum_{i=1}^\infty \frac{k_i^j}{n_i}e_i \in G_r$$ such that
$$f_r\left( \sum_{i=1}^\infty\frac{k_i^j}{n_i}e_i
  \right) = \sum_{i=1}^\infty \frac{k_i^j}{n_1\cdots n_i} = x_j.$$
As $(e_i)$ is a Schauder basis, there is $C'\in [1,\infty)$ such that
 $$\norm{\sum_{i=1}^m \frac{k_i^j}{n_i} e_i}\le C' \norm{ \sum_{i=1}^\infty \frac{k_i^j}{n_i} e_i} \le C'r.$$
We have $C'=1$ when the basis is monotone by definition.
 
We have $\lvert k_i^j/n_i \rvert \le Cr$. 
By passing to a subsequence of $(x_j)$ we may assume that $k^j_i$ converges for every $i$ as $j\to\infty$. Let
$k_i=\lim_{j\to\infty} k^j_i.$
Then
$$\norm{\sum_{i=1}^m \frac{k_i}{n_i} e_i}\le C'r \quad (m\ge 1).$$
Then being boundedly complete implies that the sum $\sum_{i=1}^\infty (k_i/n_i)e_i$ converges, obviously to a point $y\in X_{C'r}$. Clearly, 
$$f_{C'r}(y) = \sum_{i=1}^\infty \frac{k_i}{n_1\cdots n_i} \in A_{C'r}$$ is the limit of the subsequence of the original $(x_j)$.
\end{proof}

\begin{claim}\label{claimsmallr}
For $r<1/(3C)$, $A_r$ is not a union of countably many measured sets.
\end{claim}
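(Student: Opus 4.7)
I will mimic the proof of Theorem~\ref{constr} inside the Banach group $G$ and then transfer the resulting uncountable family of disjoint translates to $A_r$ via the Borel bijection $f_r$. Suppose for contradiction $A_r=\bigcup_j A_j$ with each $A_j$ Borel and measured by a translation invariant Borel measure $\mu_j$ on $\R$. For each $j$, inner regularity furnishes compact $\tilde K_j\subset A_j$ with $0<\mu_j(\tilde K_j)<\infty$. The pullback $\nu_j(E):=\mu_j(f_r(E))$ is a Borel measure on the Polish space $G_r$ with $\nu_j(f_r^{-1}(\tilde K_j))>0$. Covering $G_r$ by countably many $X$-balls of diameter $\le\eps_j$ and using inner regularity of $\nu_j$, extract a compact $L_j\subset f_r^{-1}(\tilde K_j)$ with $\nu_j(L_j)>0$ and $\diam_X(L_j)\le\eps_j$ for $\eps_j:=(r/C)\cdot 4^{-j}$; after translating by some $\ell_j\in L_j$ I may assume $L_j\subset G_{\eps_j}$, and renormalise $\mu_j$ so $\mu_j(K_j)=1$, where $K_j:=\phi(L_j)\subset A$. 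Since $\phi$ is a Lipschitz group homomorphism (with constant $C':=2C$, from the Schauder basis estimate $|k_i(g)|\le Cn_i\|g\|$), the absolutely convergent sum $M:=\sum_j L_j\subset G$ is compact, each $K_j\subset A$ is compact with $\diam(K_j)\le C'\eps_j$, and $K:=\phi(M)=\sum_j K_j\subset A$ is compact.

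The crux is a Cantor-like $P_G\subset G$ with $P_G+M\subset G_r$ and $\{p+M\}_{p\in P_G}$ pairwise disjoint in $G$. By compactness of $L_j$ and continuity of the partial-sum operators $P_N$ of the Schauder basis, for each $m\ge 1$ there is a threshold $N^j_m$ with $\sup_{g\in L_j}\|g-P_{N^j_m}(g)\|\le 4^{-m}\eps_j$. Choose a rapidly increasing sequence $m(1)<m(2)<\cdots$ with $m(l)>N^j_{4l}$ for every $j\le l+2$ and $n_{m(l)}\ge 100\cdot 4^l/r$ (both possible since $n_i\to\infty$), and set $a(l):=\lfloor rn_{m(l)}4^{-l}/5\rfloor\ge 1$. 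Define
\[
P_G\;:=\;\Bigl\{\sum_{l=1}^\infty b(l)\frac{e_{m(l)}}{n_{m(l)}}\,:\, b(l)\in\{0,a(l)\}\Bigr\}\subset G.
\]
Absolute convergence (since $\sum a(l)/n_{m(l)}\le r/15$), Schauder-basis uniqueness, and compactness of $\{0,1\}^{\N}$ make $P_G$ a compact uncountable subset of $G_{r/2}$, so $P_G+M\subset G_r$. For $p\neq p'\in P_G$ with first disagreement index $l_0$, the $e_{m(l_0)}$-coefficient of $p-p'$ is $\pm a(l_0)/n_{m(l_0)}$. Any $h\in M-M$ has the form $\sum_j h_j$ with $h_j\in L_j-L_j$; for $j\le l_0+2$, the choice $m(l_0)>N^j_{4l_0}$ gives $|k_{m(l_0)}(h_j)|=|k_{m(l_0)}(h_j-P_{N^j_{4l_0}}(h_j))|\le 2Cn_{m(l_0)}\cdot 4^{-4l_0}\eps_j$, while for $j>l_0+2$ one uses the crude bound $|k_{m(l_0)}(h_j)|\le 2Cn_{m(l_0)}\eps_j$. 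Summing these two geometric contributions (the Banach analogue of \eqref{est3}) gives $|\sum_j k_{m(l_0)}(h_j)|<a(l_0)$, so this integer sum cannot equal $\pm a(l_0)$ and therefore $p-p'\notin M-M$.

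Because $P_G+M\subset G_r$ and $f_r=\phi|_{G_r}$ is injective, the sets $\phi(p)+K$ for $p\in P_G$ form an uncountable family of pairwise disjoint translates of $K$ inside $A_r$. Replacing each $K_j$ by $K_j-\min K_j\subset[0,C'\eps_j]$ is harmless: $\mu_j(K_j)=1$ is preserved by translation invariance of $\mu_j$, and $K$ and the translate set get shifted by a common constant. This places the data in the hypothesis of Lemma~\ref{fubini-mix} with $B=A_r$, which rules out any decomposition of $A_r$ as a countable union of Borel sets with $\sigma$-finite $\mu_j$-measures, contradicting the assumption. The principal technical difficulty is the simultaneous quantitative balancing of $\eps_j$, $m(l)$, and $a(l)$ against the given sequence $(n_i)$ and the bound $r<1/(3C)$; the geometric decay of $\eps_j$ combined with $n_i\to\infty$ provides the necessary slack.
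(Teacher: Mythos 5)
Your proof is correct, but it takes a genuinely different route from the paper. The paper's proof never constructs disjoint translates inside $A_r$ at all: it pulls $\mu_j|_{A_r}$ back to a measure $\nu_j$ on $G_r$, observes that $\nu_j$ is ``translation invariant inside $G_r$'' because $f_r$ respects the group operation, and then \emph{extends} $\nu_j$ to a genuinely translation invariant Borel measure $\nu_j^*$ on all of $G$ by patching over countably many translates of $G_r$; this exhibits $G$ as a countable union of measured sets, contradicting Corollary~\ref{cor1} (hence Theorem~\ref{thmpolish}, which in turn rests on Lemma~\ref{fubini-mix2} and the transfinite-induction Lemma~\ref{unctrans}). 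You instead bypass the measure-extension trick and the Polish-group theorem entirely, redoing the quantitative construction of Theorem~\ref{constr} inside $G$: small positive-measure compact pieces $L_j\subset G_{\eps_j}$, their sum $M$, an explicit Cantor set $P_G$ of translators whose pairwise differences are excluded from $M-M$ by a coefficient estimate at position $m(l_0)$ (the Banach analogue of \eqref{est3}), and a push-forward by the injective Lipschitz homomorphism $\phi|_{G_r}$ landing uncountably many disjoint translates of $\sum_j K_j$ in $A_r$, to which Lemma~\ref{fubini-mix} applies directly. I checked your quantitative bookkeeping ($\sum_{j}|k_{m(l_0)}(h_j)|\le n_{m(l_0)}r4^{-l_0}(2\cdot4^{-3l_0}/3+1/24)<a(l_0)$) and it works; the only blemishes are cosmetic constants (e.g.\ after translating, $\diam K_j\le 4C\eps_j$ rather than $2C\eps_j$), which do not affect the argument since only $\sum_j\diam K_j<\infty$ is needed. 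What each approach buys: the paper's is much shorter given the machinery of Section~\ref{s3} and isolates the one new idea (the extension $\nu_j^*$), and that extension lemma is reusable; yours is self-contained modulo Lemma~\ref{fubini-mix} alone, avoids transfinite induction by producing an explicit perfect set of disjoint translators, and makes visible that the $\sigma$-compact set $A_r$ fails to be a countable union of measured sets for exactly the same combinatorial reason as the set $A$ of Theorem~\ref{constr}.
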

\begin{proof}
Assume that $A_r$ is a union of countably many measured sets. Let $\mu_j$ be translation invariant Borel measures on $\R$ for which there are Borel sets $A_j$ with $A_r=\cup_j A_j$, $\mu_j(A_j)>0$ and $\mu_j$ is $\sigma$-finite on $A_j$.

Let $\nu_j$ be the image of the measure ${\mu_j}|_{A_r}$ under $f_r^{-1}$. Then $\nu_j$ is a Borel measure on $G_r$, it is $\sigma$-finite on $f_r^{-1}(A_j)$, we have $\nu_j(f_r^{-1}(A_j))>0$ and $\cup_j f_r^{-1}(A_j)=G_r$.

Notice that $f_r$ basically preserves the group structure in the sense that
$$\text{if \ }x,y,x+y\in G_r \text{ \ then \ }f_r(x)+f_r(y)=f_r(x+y).$$
Therefore,
if $S\subset G_r$, $v\in G$ and $S+v\subset G_r$, then
\begin{equation}\label{inv}
\nu_j(S+v) = \mu_j(f_r(S+v)) = \mu_j(f_r(S)+f_r(v)) = \mu_j(f_r(S)) = \nu_j(S).
\end{equation}
So $\nu_j$ is ``translation invariant inside $G_r$''. Therefore we can extend $\nu_j$ to a translation invariant measure on $G$ in the following way. For a Borel set $S\subset G$ define
$$\nu^*_j(S) = \sum_{k=1}^\infty \nu_j(S_k+v_k)$$
where $\{S_k\}$ is a Borel partition of $S$ and $v_k\in G$ such that $S_k+v_k \subset G_r$ for every $k\ge 1$. Such choice of $S_k$ and $v_k$ exists because $X$ is separable, countably many translates of $G_r$ covers $G$. Property \eqref{inv} of $\nu_j$ implies that $\nu_j^*(S)$ is well-defined. It is easy to check that $\nu^*_j$ is indeed a measure.

Fix $v_k\in G$ such that $\cup_{k=1}^\infty(G_r+v_k)= G$. Let $A^*_j=\cup_k (f_r^{-1}(A_j)+v_k)$. Then $G=\cup_j A^*_j$, $\nu^*_j(A^*_j)>0$ and $\nu^*_j$ is $\sigma$-finite on $A^*_j$. Thus $G$ is a union of countably many measured sets. But this contradicts Corollary~\ref{cor1} since $G$ is a closed not locally compact subgroup of $X$. Therefore $A_r$ is not a union of countably many measured sets if $r<1/(3C)$.
\end{proof}

\begin{claim}\label{claimallr}
For every $r>0$, $A_r$ and $A$ are not a union of countably many measured sets.
\end{claim}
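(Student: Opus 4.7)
The strategy is to reduce the claim for $A_r$ to the analogous statement for $A$, and then to prove that $A$ itself is not a union of countably many measured sets by constructing suitable compact sets and invoking Lemma~\ref{fubini-mix}. Since $X$ is separable, $G = \bigcup_k (G_r + v_k)$ for countably many $v_k \in G$, and applying the homomorphism $f$ gives $A = \bigcup_k (A_r + f(v_k))$. If $A_r = \bigcup_j A_j$ with $A_j$ measured by $\mu_j$, then translation invariance of each $\mu_j$ makes every $A_j + f(v_k)$ measured by $\mu_j$, so $A = \bigcup_{j,k}(A_j + f(v_k))$ would be a countable union of measured sets. Contrapositively, it suffices to show that $A$ is not such a union.

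Assume for contradiction $A = \bigcup_j A_j$ with each $A_j$ measured by $\mu_j$. Fix $r' < 1/(3C)$, so that $f_{r'}$ is a Borel bijection and representations in $A_{r'}$ are unique. By inner regularity pick a compact $K_j^{(0)} \subset A_j$ with $\mu_j(K_j^{(0)}) > 0$; from the countable cover $A = \bigcup_k(A_{r'} + t_k)$ and $\sigma$-additivity, some translate by $-t_{k_j}$ produces a Borel set $(K_j^{(0)} - t_{k_j}) \cap A_{r'}$ of positive $\mu_j$-measure, from which inner regularity extracts a compact $K_j^{(1)} \subset A_{r'}$. Now mimic the tail-control/head-stripping construction in the proof of Theorem~\ref{constr}: the Borel function $\tau_R(x) = \|\sum_{i \ge R}(k_i(x)/n_i)e_i\|_X$ on $A_{r'}$ tends to $0$ as $R \to \infty$, so by $\sigma$-additivity one extracts a compact $K_j^{(2)} \subset K_j^{(1)}$ of positive $\mu_j$-measure on which $\tau_{R^j_m}(x) < 4^{-m}$ for every $m \ge 1$; then partition $K_j^{(2)}$ by the countably many values of the head $\sum_{i < R^j_j} k_i(x)/(n_1\cdots n_i) \in \R$ (countably many because the integer coefficients $k_i(x)$ are bounded by $C n_i r'$), pick a piece of positive $\mu_j$-measure, take a compact subset, and translate by the negative of the common head. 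The resulting compact $K_j \subset A_{4^{-j}}$ has positive finite $\mu_j$-measure and lies in an $\R$-interval of length $O(4^{-j})$.

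Set $K = \sum_j K_j$. The integrality-plus-absolute-convergence argument from the earlier part of the proof of Theorem~\ref{banachconstr} (an absolutely summable sum of integer-coefficient representations lies in $G$, because $\sum_j |k_i^j| < \infty$ with $k_i^j \in \Z$ forces $k_i^j = 0$ eventually for each fixed $i$) gives $K \subset A_{\sum_j 4^{-j}} \subset A$, with $K$ compact and contained in a bounded $\R$-interval. To apply Lemma~\ref{fubini-mix} with $B = A$, one needs uncountably many pairwise disjoint translates of $K$ in $A$. As $A$ is a proper Borel subgroup of $\R$, Steinhaus' theorem makes $A$ Lebesgue-null, so $K - K \subset A$ is Lebesgue-null and nowhere dense in $\R$; the Borel relation $M = \{(x,y) \in A^2 : x \ne y,\, x - y \in K - K\}$ then has nowhere-dense vertical sections, and a Mycielski-type construction (via Kuratowski--Ulam on a Polish refinement of the standard Borel structure of $A$) produces a perfect $P \subset A$ with $\{K + t : t \in P\}$ pairwise disjoint. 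Lemma~\ref{fubini-mix} then contradicts the cover $A = \bigcup_j A_j$.

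The main technical obstacle is the iterative extraction of $K_j \subset A_{4^{-j}}$ of positive $\mu_j$-measure: positivity has to be maintained both through the tail-control intersection over $m$ and through the partition-and-translate step that strips the head of the representation. Once $K \subset A$ with summable diameters has been built, its containment in $A$ is a direct consequence of the integrality argument, and the existence of uncountably many disjoint translates in $A$ follows from a standard descriptive-set-theoretic Mycielski-type construction applied to the Lebesgue-null set $K - K$.
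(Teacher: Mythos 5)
Your opening reduction (cover $A$ by countably many translates of $A_r$, use translation invariance of the $\mu_j$) is sound, but it points you in the opposite direction from the paper: the paper covers $A_r$ and $A$ by countably many translates of $A_s$ for a \emph{small} $s<1/(3C)$, notes $A_s\subset A_r$, and reduces everything in two lines to Claim~\ref{claimsmallr}, which has already been proved. By reducing instead to the full group $A$, you commit yourself to a direct proof for $A$, and that is where your argument has a genuine gap.

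The gap is the step producing uncountably many pairwise disjoint translates of $K=\sum_j K_j$ inside $A$. You need a perfect (or merely uncountable) $T\subset A$ with $t-t'\notin K-K$ for distinct $t,t'\in T$. Your justification is that $K-K$ is Lebesgue-null and nowhere dense \emph{in $\R$}, but this gives no smallness relative to $A$: the group $A$ is itself Lebesgue-null and nowhere dense in $\R$ (and, in the boundedly complete case, $\sigma$-compact), so the sections $\bigl(x-(K-K)\bigr)\cap A$ of your relation $M$ could a priori be large in any Polish refinement of $A$; Mycielski/Kuratowski--Ulam has nothing to bite on. Nor can you run the transfinite induction of Lemma~\ref{unctrans}, since $A$ may be $\sigma$-compact and hence possibly covered by countably many translates of the compact set $K-K$; ruling that out would require exactly the kind of explicit digit bookkeeping used to build the perfect set $P$ in Theorem~\ref{constr}, which you do not carry out. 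The paper avoids this entirely by going upstairs: it pushes the measures $\mu_j|_{A_r}$ to $G_r$ via $f_r^{-1}$, extends them to translation-invariant measures on the non-locally-compact Polish group $G$, and invokes Corollary~\ref{cor1}, where the disjoint translates come for free from Lemma~\ref{unctrans} because $G$ is \emph{not} $\sigma$-compact. Your tail-control/head-stripping construction of the $K_j$ and the integrality argument for $K\subset A$ are plausible (they essentially redo downstairs what the paper does in $G$), but without a working disjoint-translates step the appeal to Lemma~\ref{fubini-mix} cannot be made, so the proof as written does not close.
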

\begin{proof}
Let $0<s<1/(3C)$ and $r\in (s,\infty]$, where $A_\infty=A$ and $G_\infty=G$.
Since $G_r$ is covered by countably many translates of $G_s$, we see that $A_r$ is also covered by countably many translates of $A_s$ and $A_s\subset A_r$. It is easy to check that if $A_r$ was a union of countably many measured sets, then so would be $A_s$. Claim~\ref{claimsmallr} implies our claim.
\end{proof}
Claim~\ref{claimcompact} and Claim~\ref{claimallr} conclude the proof of Theorem~\ref{banachconstr}.
\end{proof}

\bigskip

\section{Typical sets and measured sets}\label{s4}

In this section we prove that there is a measured (compact) set in $\R$ which is null or non-$\sigma$-finite for every Hausdorff measure (with arbitrary gauge function).
We also show that `many' $C^1$ images of sets similar to those constructed in Section~\ref{s2} and \ref{s3} are measured, but not a union of countably many sets which are measured by Hausdorff measures.

We start with a few lemmas.

\begin{lemma}
Let $A\subset \R$ and $f:A\to \R$ be Lipschitz with Lipschitz constant $L$. Then $$\iH^g(f(A))\le \lceil L\rceil\cdot\iH^g(A)$$ for any gauge function $g$.
\end{lemma}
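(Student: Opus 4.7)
The plan is the standard covering-transport argument for Hausdorff measure, but with one twist to cope with the fact that the gauge $g$ is only assumed monotone (not subhomogeneous), which is exactly what forces the ceiling $\lceil L\rceil$.

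First I fix $\delta>0$ and $\eps>0$ and choose a cover $\{U_i\}_{i\ge 1}$ of $A$ with $\diam U_i<\delta$ and
$$\sum_{i=1}^\infty g(\diam U_i)<\iH^g_\delta(A)+\eps,$$
where $\iH^g_\delta$ denotes the usual $\delta$-premeasure appearing under the limit in the definition of $\iH^g$. By the Lipschitz property, each image $f(A\cap U_i)\subset\R$ lies in some closed interval $J_i$ of length at most $L\,\diam U_i$.

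The key observation is now purely elementary: since $L/\lceil L\rceil\le 1$, the interval $J_i$ may be split into $\lceil L\rceil$ subintervals $V_{i,1},\dots,V_{i,\lceil L\rceil}$, each of diameter at most $\diam U_i<\delta$. Thus the family $\{V_{i,k}\}_{i\ge 1,\,1\le k\le\lceil L\rceil}$ is an admissible $\delta$-cover of $f(A)$, and by monotonicity of $g$,
$$\sum_{i,k} g(\diam V_{i,k})\le \lceil L\rceil \sum_{i=1}^\infty g(\diam U_i)<\lceil L\rceil\bigl(\iH^g_\delta(A)+\eps\bigr).$$
Hence $\iH^g_\delta(f(A))\le \lceil L\rceil\,\iH^g_\delta(A)+\lceil L\rceil\eps$; letting $\eps\to 0$ and then $\delta\to 0^+$ yields the claim.

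There is no real obstacle here. The only subtle point is resisting the temptation to bound the contribution of $f(A\cap U_i)$ by $g(L\,\diam U_i)$ directly: since $g$ is only monotone and right continuous, this quantity need not be comparable to $L\cdot g(\diam U_i)$, which is why we split $J_i$ into $\lceil L\rceil$ pieces of diameter at most $\diam U_i$ instead.
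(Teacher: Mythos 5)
Your proof is correct and follows essentially the same route as the paper: cover $A$ by small sets, note that each image has diameter at most $L$ times as large, and replace it by $\lceil L\rceil$ intervals of the original diameter so that monotonicity of $g$ applies. The paper's version is just a terser statement of the same argument; your explicit remark about why one cannot simply use $g(L\cdot\diam\,U_i)$ is exactly the point of the $\lceil L\rceil$ factor.
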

\begin{proof}
Consider an arbitrary covering $\{U_i\}$ of $A$. For each $i$, 
$$\diam\,f(U_i) \le L \cdot \diam\,U_i.$$
Cover $f(U_i)$ with $\lceil L \rceil$ many intervals of length $\diam\,U_i$. The union of all these intervals cover $f(A)$. Therefore $\iH^g(f(A))\le \lceil L\rceil\cdot\iH^g(A)$.
\end{proof}

\begin{cor}\label{hauseq}
If $A\subset \R$ is measured by a Hausdorff measure (with respect to some gauge function), then any bi-Lipschitz image of $A$ is measured as well (using the same Hausdorff measure).\qed
\end{cor}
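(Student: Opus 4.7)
The plan is to deduce this directly from the Lipschitz lemma that immediately precedes it, applied once to the bi-Lipschitz map $f$ and once to its inverse $f^{-1}$. Let $g$ be a gauge function for which $\iH^g$ assigns positive and $\sigma$-finite measure to $A$, and let $f: A \to f(A)$ be bi-Lipschitz with constants $L$ for $f$ and $L'$ for $f^{-1}$.

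First I would establish $\sigma$-finiteness of $\iH^g$ on $f(A)$. Since $\iH^g$ is $\sigma$-finite on $A$, write $A=\bigcup_{n=1}^\infty A_n$ with $\iH^g(A_n)<\infty$. Applying the preceding lemma to the restriction of $f$ to each $A_n$ gives $\iH^g(f(A_n))\le \lceil L\rceil\cdot \iH^g(A_n)<\infty$, so $f(A)=\bigcup_n f(A_n)$ exhibits $\sigma$-finiteness.

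Next I would verify positivity. Suppose for contradiction that $\iH^g(f(A))=0$. Applying the preceding lemma to $f^{-1}:f(A)\to A$ yields
\[
\iH^g(A)=\iH^g\bigl(f^{-1}(f(A))\bigr)\le \lceil L'\rceil\cdot \iH^g(f(A))=0,
\]
contradicting $\iH^g(A)>0$. Hence $\iH^g(f(A))>0$, and together with the previous step $\iH^g$ is a translation invariant Borel measure witnessing that $f(A)$ is measured.

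There is no real obstacle here; the only point to be slightly careful about is that the preceding lemma is stated for a Lipschitz map defined on a subset of $\R$, so one must apply it to $f$ and $f^{-1}$ as maps between subsets of $\R$, which is exactly the bi-Lipschitz setup. The argument uses nothing beyond the stated lemma and the definition of ``measured by a Hausdorff measure''.
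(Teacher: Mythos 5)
Your argument is correct and is exactly the intended one: the paper marks this corollary with \qed as an immediate consequence of the preceding Lipschitz lemma, applied to $f$ for $\sigma$-finiteness and to $f^{-1}$ for positivity, which is precisely what you spell out.
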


The following is an important observation of M.~Elekes and T.~Keleti, see \cite[Lemma 2.17]{kiterjesztes} for a general result.

\begin{lemma}\label{xyuv}
Let $\emptyset \neq A\subset \R$ be Borel such that $A\cap (A+t)$ consists of at most $1$ point for every $t\neq 0$. (In other words, the equation $x-y=u-v$ only has trivial solutions in $A$.) Then $A$ is measured.
\end{lemma}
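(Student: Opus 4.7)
The plan is to construct an explicit translation-invariant Borel measure $\mu$ on $\R$ that assigns positive and finite measure to $A$. I would split the construction according to the cardinality of $A$.

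If $A$ is countable, I take $\mu$ to be counting measure on $\R$: it is translation-invariant, $\mu(A)=|A|>0$, and $A$ is $\sigma$-finite as a countable union of singletons, each of measure $1$. The Sidon-type hypothesis is not used in this case.

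If $A$ is uncountable, I would fix a non-atomic Borel probability measure $\nu$ on $A$; such a $\nu$ exists because every uncountable Borel set in $\R$ contains a perfect subset, which carries a non-atomic Borel probability. Then for each Borel set $B\subset\R$ I define
$$\mu(B) := \sum_{t\in\R} \nu\bigl(A\cap(B-t)\bigr),$$
the uncountable sum being, as usual, the supremum over finite sub-sums of these non-negative quantities (equivalently, the ordinary sum whenever only countably many summands are nonzero). Translation invariance follows immediately from the change of variable $t\mapsto t+s$, and countable additivity is obtained by swapping the order of summation: for a disjoint Borel partition $B=\bigcup_n B_n$, one has $\nu(A\cap(B-t))=\sum_n\nu(A\cap(B_n-t))$, and exchanging with $\sum_{t\in\R}$ is legal for non-negative terms (when $\mu(B)<\infty$ only countably many $t$'s contribute, reducing to an ordinary double sum; when $\mu(B)=\infty$ one checks directly that $\sum_n\mu(B_n)=\infty$ as well).

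The key computation is $\mu(A)$: at $t=0$ the summand is $\nu(A)=1$, while for $t\neq 0$ the hypothesis forces $|A\cap(A-t)|\le 1$ and non-atomicity of $\nu$ kills the summand, giving $\mu(A)=1$, which is positive, finite, and trivially $\sigma$-finite on $A$. The main obstacle is exactly this collapse of the uncountable sum in $\mu(A)$ to the single non-zero term at $t=0$; this is the step where the hypothesis $x-y=u-v\Rightarrow \{x,y\}=\{u,v\}$ is used in an essential way, since otherwise the diagonal contributions $\nu(A\cap(A-t))$ with $t\neq 0$ could destroy both the positivity structure and the $\sigma$-finiteness of $\mu$ on $A$.
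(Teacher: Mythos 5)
Your proof is correct and follows essentially the same route as the paper: counting measure in the countable case, and in the uncountable case a non-atomic Borel probability measure $\nu$ on $A$ extended to a translation invariant Borel measure, the Sidon-type hypothesis ensuring that the extension still gives $A$ measure $1$. The only difference is that the paper outsources the extension step to a citation (Lemma 2.17 of Elekes--Keleti--M\'ath\'e), whereas you write it out explicitly as $\mu(B)=\sum_{t\in\R}\nu\bigl(A\cap(B-t)\bigr)$, which is precisely the construction behind that cited lemma.
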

\begin{proof}
If $A$ is countable, then the counting measure measures $A$. Otherwise $A$ contains non-empty perfect sets and thus supports non-atomic Borel probability measures. Any such measure $\mu$ on $A$ can be extended to be a translation invariant Borel measure $\mu^*$ on $\R$ for which $\mu^*(A)=1$, see \cite[Lemma 2.17]{kiterjesztes}.
\end{proof}

\begin{theorem}
There is a compact set in $\R$ which is measured by some translation invariant Borel measure, but it is not measured by any Hausdorff measure.
\end{theorem}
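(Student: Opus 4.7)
The plan is to modify Davies's construction \cite{davies} of a compact set not measured by any Hausdorff measure so that the resulting set additionally has only trivial solutions to $x-y=u-v$, allowing us to make it measured via Lemma~\ref{xyuv}. Recall that Davies produces a Cantor-type compact set $K=\bigcap_n K_n$, where $K_n$ is a union of $M_n$ disjoint closed intervals of a common length $\ell_n$, and the sequences $(M_n,\ell_n)$ are engineered so that for every gauge function $g$ the quantity $M_n g(\ell_n)$ either decreases fast enough to force $\iH^g(K)=0$, or grows fast enough to prevent $\sigma$-finiteness.

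The key observation is that Davies's conditions constrain only the \emph{sizes} and \emph{counts} of the intervals, not their positions within each parent. I would exploit this freedom, together with von~Neumann's theorem on perfect sets of algebraically independent reals, to ensure that the points of the limit set are $\Q$-linearly independent. Concretely, at stage $n$ of the construction I would enumerate nontrivial integer linear relations $\sum_{i=1}^{m} a_i z_i=0$ of bounded height on $m\le n$ variables, and for each one perturb the positions of the current child intervals, by arbitrarily small amounts preserving $M_n$, $\ell_n$ and the nesting, so that no $m$-tuple of endpoints drawn from $m$ distinct descendant intervals satisfies that relation. The ``bad'' placements form proper real-algebraic subvarieties of the (finite-dimensional) placement parameter space, so such perturbations always exist. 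A standard diagonalisation then yields a compact limit set $K$ every finite subset of which is $\Q$-linearly independent.

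It remains to verify the two required properties. Since the perturbations do not change $(M_n)$ or $(\ell_n)$, Davies's analysis applies verbatim and $K$ is null or non-$\sigma$-finite for every Hausdorff measure. Conversely, if $x,y,u,v\in K$ satisfy $x-y=u-v\ne 0$ with $\{x,y\}\ne\{u,v\}$ as multisets, then $x-y-u+v=0$ is a nontrivial integer linear relation among at most four distinct elements of $K$, contradicting $\Q$-linear independence. Hence $K\cap(K+t)$ contains at most one point for every $t\ne 0$, and Lemma~\ref{xyuv} then provides a translation invariant Borel measure for which $K$ has positive finite measure.

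The main obstacle is handling the two requirements in tandem: Davies's size prescription is delicate, while $\Q$-linear independence demands generic placement. These constraints fortunately decouple, since Davies controls only lengths and multiplicities; the challenge is therefore essentially bookkeeping, namely carrying out a diagonal/Baire-category argument on the placement parameters while leaving the Davies skeleton intact.
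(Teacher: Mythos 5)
Your overall strategy---decouple the additive structure (to make the set measured via Lemma~\ref{xyuv}) from the metric data (to keep it unmeasured by Hausdorff measures)---is the right instinct and is indeed the idea behind the paper's proof. But the step where you claim that ``Davies's analysis applies verbatim'' because the perturbations preserve the counts $M_n$ and lengths $\ell_n$ is a genuine gap. The property ``null or non-$\sigma$-finite for every gauge function'' is not determined by $(M_n,\ell_n)$ alone: the counts and lengths give only the upper bound $\iH^g(K)\le\liminf_n M_n g(\ell_n)$; they yield neither a lower bound nor, more importantly, non-$\sigma$-finiteness. In Davies's construction the set is (essentially) an increasing union $\bigcup_n(E_1+\cdots+E_n)$, and non-$\sigma$-finiteness is proved by exhibiting, inside the $(n+1)$-st level, uncountably many pairwise disjoint \emph{translates} of a positive-measure piece of the $n$-th level---an argument that uses precisely the additive coincidences that your $\Q$-linear-independence requirement destroys. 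Worse, once the pieces are repositioned generically, a balanced Cantor-type set with data $(M_n,\ell_n)$ carries the natural gauge $g$ with $g(\ell_n)=1/M_n$, for which $0<\iH^g<\infty$ (this is exactly the content of Claim~\ref{mertek} in this paper); so your perturbed set is in danger of being measured by a Hausdorff measure, which is the opposite of what you need.

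The repair is to transfer the Hausdorff-measure behaviour by a bi-Lipschitz map rather than through the skeleton $(M_n,\ell_n)$: Hausdorff measures of every gauge are quasi-preserved by bi-Lipschitz maps (Corollary~\ref{hauseq}), while the additive structure is not. This is what the paper does with $\exp/\log$: starting from an algebraically independent perfect set $P$ (von Neumann) with disjoint perfect subsets $P_i$, the set $A=\bigcup_n P_1\cdots P_n$ admits no nontrivial solutions of $x-y=u-v$, hence is measured by Lemma~\ref{xyuv}, whereas $\log A=\bigcup_n(\log P_1+\cdots+\log P_n)$ is not measured by \emph{any} translation invariant measure by the Davies disjoint-translates argument; since $A$ is locally a bi-Lipschitz image of $\log A$, it cannot be measured by any Hausdorff measure. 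To salvage your scheme you would have to make the perturbation a bi-Lipschitz homeomorphism of Davies's set onto its image and quote Corollary~\ref{hauseq}, not the invariance of $(M_n,\ell_n)$. A secondary point: controlling integer relations only on interval \emph{endpoints} at finite stages does not force $\Q$-linear independence of the limit set; you must arrange $\bigl|\sum_i a_i z_i\bigr|>0$ for all points $z_i$ ranging over the chosen intervals, which requires shrinking the intervals against the coefficient sizes (a Mycielski-type argument), again altering the very skeleton you wish to keep intact.
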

\begin{proof}
It follows from a result of J.~von~Neumann \cite{neumann} that there is an algebraically independent non-empty perfect set in $(2, \infty)$.

We can find non-empty disjoint perfect sets $P_i$ ($i=1,2,\ldots$) in $P$.
Let 
$$A=\bigcup_{n=1}^\infty P_1 P_2 \cdots P_n$$
where products of sets is defined as $S T = \{st:s\in S, t\in T\}$. Then $A\subset (2,\infty)$ is clearly closed. Since $P$ is algebraically independent, the equation $x-y=u-v$ in $A$ only has trivial solutions. Therefore $A$ is measured by Lemma~\ref{xyuv}.

On the other hand, let
$$B=\log A = \bigcup_{n=1}^\infty (\log P_1 + \log P_2 +\ldots + \log P_n)$$
where $\log S = \{\log s : s\in S\}$. We claim that $B$ is not measured. (This will be very similar to the argument Davies used in \cite{davies}.)
Indeed, let $\mu(B)>0$ for a translation invariant Borel measure. Then there is $n$ such that
$\mu(\log P_1 + \ldots +\log P_n)>0$. Since $P$ is algebraically independent, the sets
$$\log P_1 + \ldots +\log P_n + t \quad (t\in \log P_{n+1})$$
are all disjoint. Since $\log P_{n+1}$ is uncountable,
$\mu$ must be non-$\sigma$-finite on $\log P_1 + \ldots +\log P_{n+1}$, hence also on $B$. 

Since $A$ is a bi-Lipschitz image of $B$, the set $A$ cannot be measured by any Hausdorff measure by Lemma~\ref{hauseq}.

The set $A$ is closed but not compact. To obtain a compact example, take
\[
\bigcup_{n=1}^\infty \Big(\big(A\cap [\sqrt{n}, \sqrt{n+1})\big) -\sqrt{n}\Big).\qedhere
\]
\end{proof}

Now we aim at the stronger theorem that there are compact sets which are measured, but not a union of countably many sets which are measured by Hausdorff measures.
 

\begin{lemma}\label{typimage}
Let $E\subset [0,1]$ be a compact set of lower box dimension less than $1/4$. Then for a typical $C^1$ function $f:[0,1]\to \R$, $f(E)$ is measured.
\end{lemma}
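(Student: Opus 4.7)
By Lemma~\ref{xyuv} it suffices to show that for a typical $f\in C^1([0,1])$, the compact set $f(E)$ satisfies: the equation $x-y=u-v$ has only trivial solutions. Writing $F(f,a,b,c,d) = f(a)+f(d)-f(b)-f(c)$, such a non-trivial solution corresponds to $(a,b,c,d)\in E^4$ with $F(f,a,b,c,d)=0$ and both $f(a)\neq f(b)$ and $f(a)\neq f(c)$. For each $n\geq 1$ set
$$W_n = \left\{f\in C^1([0,1]) : \exists (a,b,c,d)\in E^4,\ F(f,a,b,c,d)=0,\ \min(|f(a)-f(b)|,|f(a)-f(c)|)\geq 1/n\right\}.$$
Joint continuity of $F$ and compactness of $E^4$ make each $W_n$ closed, so the set of $f$ to which Lemma~\ref{xyuv} applies (and hence yielding $f(E)$ measured) is exactly $C^1([0,1])\setminus\bigcup_n W_n$, a $G_\delta$ set.

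The heart of the argument is to show that each $W_n$ is nowhere dense. Given $f_0\in C^1([0,1])$ and $\eps>0$, I consider the two-parameter perturbation $g_{t_1,t_2}(x) = f_0(x) + t_1 x + t_2 x^2$ for $(t_1,t_2)$ in a small disk $D\subset\R^2$ about the origin, chosen so that $\|g_{t_1,t_2}-f_0\|_{C^1}<\eps$. The algebraic key is that for any non-trivial quadruple $(a,b,c,d)\in E^4$, meaning $\{a,d\}\neq\{b,c\}$ as multisets, the pair
$$(\alpha,\beta) := (a+d-b-c,\ a^2+d^2-b^2-c^2)$$
does not vanish; indeed, simultaneous vanishing would force $\{a,d\}$ and $\{b,c\}$ to be roots of the same monic quadratic. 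Therefore $F(g_{t_1,t_2},a,b,c,d) = F(f_0,a,b,c,d) + t_1\alpha + t_2\beta = 0$ defines a genuine line $L_{abcd}\subset\R^2$, and the set of $(t_1,t_2)\in D$ with $g_{t_1,t_2}\in W_n$ is contained in the union of these lines over non-trivial quadruples additionally satisfying $|g(a)-g(b)|,|g(a)-g(c)|\geq 1/n$.

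To bound the measure of this bad parameter set I invoke $\underline{\dim}_B E < 1/4$. Since $\|g_{t_1,t_2}\|_{C^1}$ is bounded on $D$ by some constant $C$, the $1/n$-separation forces $|a-b|,|a-c|\geq 1/(nC)$; on the resulting compact restricted domain $E^4_*\subset E^4$, the pair $(\alpha,\beta)$ is uniformly bounded away from zero, say $|(\alpha,\beta)|\geq c_*>0$. Choose $s$ with $\underline{\dim}_B E < s < 1/4$; by the definition of lower box dimension there are arbitrarily small $\delta>0$ with $N(E,\delta)\leq \delta^{-s}$, so $E^4_*$ admits a covering by at most $\delta^{-4s}$ boxes of side $\delta$. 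A direct Lipschitz estimate shows that $\alpha$, $\beta$ and $\gamma:=F(f_0,a,b,c,d)$ each vary by $O(\delta)$ over such a box, and combined with $|(\alpha,\beta)|\geq c_*$ this confines the lines $L_{abcd}$ from it to an $O(\delta)$-neighborhood of a single fixed line in $D$, contributing area $O(\delta)$. Summation produces a total bad area of $O(\delta^{1-4s})\to 0$, so the bad parameter set has planar Lebesgue measure zero. Hence arbitrarily close to $(0,0)$ in $D$ there are $(t_1,t_2)$ with $g_{t_1,t_2}\notin W_n$, proving $W_n$ has empty interior; Baire's theorem then finishes the proof.

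The main obstacle is the degeneracy of $(\alpha,\beta)$ near the trivial manifold $\{\{a,d\}=\{b,c\}\}\subset E^4$, where a naive covering would produce arbitrarily wide strips and spoil the area bound. The restriction to $E^4_*$, legitimate precisely because the $1/n$-separation is built into the definition of $W_n$, is what supplies the uniform lower bound on $|(\alpha,\beta)|$ needed to make the strip estimate summable.
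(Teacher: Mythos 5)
Your proposal is correct and follows essentially the same route as the paper, which only sketches this step by pointing to the transversality argument of \cite{where} and asserting that it adapts from the injectivity equation $f(x)-f(y)=0$ (dimension $<1/2$) to $f(x)-f(y)=f(u)-f(v)$ (dimension $<1/4$); your two-parameter perturbation $t_1x+t_2x^2$, the non-vanishing of $(a+d-b-c,\,a^2+d^2-b^2-c^2)$ off the trivial multiset locus, and the $O(\delta^{1-4s})$ strip count are exactly that adaptation, carried out in full. The key technical point --- restricting to quadruples with $|a-b|,|a-c|\ge 1/(nC)$ so that $(\alpha,\beta)$ is uniformly bounded away from zero --- is handled correctly.
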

\begin{proof}[Sketch of proof.]
A result \cite{where} of Z.~Buczolich and the present author states that for every compact set $E\subset [0,1]$ with lower box dimension less than $1/2$, a typical $C^1$ function $f:[0,1]\to \R$ is injective on $E$. In other words, if $f(x)-f(y)=0$ with $x,y\in E$, then $x=y$. One can similarly show that if the lower box dimension of $E$ is less than $1/4$, then for a typical $f\in C^1([0,1])$, the equation $$f(x)-f(y)=f(u)-f(v) \qquad (x,y,u,v\in E)$$ only has trivial solutions, that is, $x=u$ and $y=v$, or $x=y$ and $u=v$. 
In particular, the equation $x-y=u-v$ in $f(E)$ only has trivial solutions. Then Lemma~\ref{xyuv} implies that $f(E)$ is measured.
\end{proof}

\begin{lemma}\label{daviesbox}
There is a non-empty compact set $E\subset \R$ of box dimension $0$ which is not a union of countably many measured sets.
\end{lemma}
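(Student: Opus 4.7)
The plan is to adapt the construction of Theorem~\ref{constr} by restricting the allowed digit support to a very sparse subset $T \subset \N$, which drives the box dimension to $0$ while preserving the algebraic features that powered Theorem~\ref{constr}. Concretely, I would take $n_i = i+2$ and $T = \{2^l : l \ge 1\}$, and define
$$E = \left\{\sum_{i \in T} \frac{k_i}{n_1 \cdots n_i} : k_i \in \{0,\ldots,n_i-1\},\ \sum_{i \in T} \frac{k_i}{n_i} \le \tfrac12\right\}.$$
Then $E$ is a non-empty compact set and each point has a unique expansion of the above form, because $k_i = 0$ for every $i$ in the infinite complement $\N\setminus T$, ruling out the usual ``all $n_i-1$ tail'' ambiguity. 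Covering $E$ at scale $\delta_N = 1/(n_1\cdots n_{2^N})$ by the obvious $\prod_{l=1}^N n_{2^l}$ intervals of length $\delta_N$ gives
$$\frac{\log \prod_{l=1}^N n_{2^l}}{\log(1/\delta_N)} = \frac{\sum_{l=1}^N \log(2^l+2)}{\sum_{i=1}^{2^N} \log(i+2)} = \frac{O(N^2)}{\Theta(N\cdot 2^N)} \longrightarrow 0,$$
and comparison at intermediate scales confirms $\dim_B E = 0$.

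To show that $E$ is not a union of countably many measured sets, I would run the proof of Theorem~\ref{constr} with only cosmetic modifications. The shift $h_r$ preserves the property ``digit support in $T$'', so the truncated compact sets $K''_j = h_{r^j_j}(K'_j)$ lie in $E$. The key digit-arithmetic check is that the bound $\sum_i k_i(x)/n_i \le 4^{-j}$ for $x \in K''_j$ forces $k_i(x) \le n_i\cdot 4^{-j}$, whence $\sum_j k_i(x^{(j)}) \le n_i\sum_j 4^{-j} = n_i/3 < n_i$ at every position. Hence the addition $\sum_j x^{(j)}$ produces no carries, its canonical expansion still has support in $T$, and $K = \sum_j K''_j \subset E$. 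For the perfect set $P$, I would take the indices $m(l) = 2^{2(l+1)} \in T$, so that $n_{m(l)} = 4^{l+1}+2$ and $a(l) = 1$ satisfies $a(l)/n_{m(l)} \in [4^{-l}/5,\,4^{-l}/4]$. The disjointness of the translates $\{p + K : p \in P\}$ follows from the same tail estimate on $\sum_{i\ge m(l_0)} k_i/n_i$ as in Theorem~\ref{constr}, and Lemma~\ref{fubini-mix} then delivers the conclusion.

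The main obstacle is to balance three competing demands on the parameters $T$, $(n_i)$ and $(m(l))$ at once: $T$ must be sparse enough to force $\dim_B E = 0$; $(n_i)$ must tend to infinity for uniqueness and so that the narrow size windows at the special positions $m(l)$ are achievable; yet $(n_i)$ must grow slowly enough that the sparsity of $T$ still dominates in the dimension computation; and $m(l)$ must lie in $T$ while landing exactly in the interval required by the construction of the Cantor set $P$. The explicit choices above satisfy all of these constraints simultaneously.
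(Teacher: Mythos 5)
Your construction is correct in outline but takes a genuinely different route from the paper's. The paper keeps the digit alphabet $\{0,\ldots,n_i-1\}$ and the constraint $\sum_i k_i/n_i\le 1/2$ but inflates the positional weights, replacing $1/(n_1\cdots n_i)$ by $1/(N_1\cdots N_i)$ with $N_i\ge n_i^i$; you instead keep the weights and sparsify the set of positions where nonzero digits are allowed. Both devices make the covering number at scale $\delta$ grow like $\delta^{-o(1)}$ while leaving the digit arithmetic of Theorem~\ref{constr} (no carries, closure of $\sum_j K''_j$ under the constraint, the tail estimates) intact, so either yields the lemma. The paper's version is cheaper to verify because the proof of Theorem~\ref{constr} applies verbatim; yours requires the extra checks that $h_r$, digitwise addition and the set $P$ all preserve the support condition ``$k_i=0$ for $i\notin T$'', which you carry out correctly, including the uniqueness of expansions.

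There is one step that fails as written: you cannot fix $m(l)=2^{2(l+1)}$ and $a(l)=1$ in advance. The construction of $P$ in Theorem~\ref{constr} requires $m(l)\ge\max(r^1_{4l},\ldots,r^{l+2}_{4l})$, where the $r^j_m$ come from the uniformisation of the compact sets $K_j$ extracted from the \emph{hypothetical} countable decomposition of $E$; they can be arbitrarily large and are not known when $E$ is built or when the proof begins. Without that inequality the tail bound \eqref{est3} is unavailable and the translates $p+K$ ($p\in P$) need not be disjoint, which is exactly what feeds Lemma~\ref{fubini-mix}. The repair is immediate: take $m(l)$ to be any element of $T$ (a power of $2$) exceeding both $\max(r^1_{4l},\ldots,r^{l+2}_{4l})$ and $20\cdot 4^{l}$; the latter makes the window $[n_{m(l)}4^{-l}/5,\ n_{m(l)}4^{-l}/4]$ of length at least one, so it contains an integer $a(l)$. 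Since $T$ is unbounded and $n_i\to\infty$, such a choice always exists, and nothing else in your argument changes.
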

\begin{proof}
The set $A$ in Theorem~\ref{constr} can be easily modified to have box dimension zero.
Let $2\le n_1 \le n_2 \cdots \to\infty$, and let $N_i\ge n_i^i$. Then the same proof shows that 
$$A'=\left\{\sum_{i=1}^\infty \frac{k_i}{N_1 \cdots N_i}\,:\,
k_i\in\{0,1,\ldots, n_i-1\} \text{ and }
\sum_{i=1}^\infty \frac{k_i}{n_i} \le 1/2
\right\}$$
is not a union of countably many measured sets. It is easy to check that $A'$ has box dimension zero.
\end{proof}

\begin{theorem}\label{measnohaus}
There is a compact set in $\R$ which is measured by some translation invariant Borel measure, but it is not a union of countably many sets which are measured by some Hausdorff measures.
\end{theorem}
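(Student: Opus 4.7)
My approach is to transport the compact set of Lemma~\ref{daviesbox} by a sufficiently generic bi-Lipschitz $C^1$ map. Let $E\subset[0,1]$ be the compact set of box dimension zero that is not a union of countably many measured sets, supplied by Lemma~\ref{daviesbox}. I would choose a bi-Lipschitz $C^1$ map $f:[0,1]\to\R$ such that the equation $f(x)-f(y)=f(u)-f(v)$ has only trivial solutions on $E$; equivalently, the equation $x-y=u-v$ has only trivial solutions on $A:=f(E)$. Then $A$ is compact, and Lemma~\ref{xyuv} ensures that $A$ is measured by some translation invariant Borel measure.

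To produce such an $f$, I would rerun the genericity argument of Lemma~\ref{typimage} inside the open Polish subspace $U=\{f\in C^1([0,1]):f'>0\text{ on }[0,1]\}$. The bad set (those $f$ admitting a nontrivial $4$-tuple in $E$) is meager in $C^1([0,1])$ by the proof of Lemma~\ref{typimage}; the consolidation of the uncountable family of $4$-tuples via the low box dimension of $E$ is local in nature, and small $C^1$-bump perturbations show that the bad set remains meager when restricted to $U$. A typical $f\in U$ therefore satisfies the trivial-solutions property on $E$. Since $f'$ is continuous and strictly positive on the compact interval $[0,1]$, it is bounded away from zero, so $f$ is bi-Lipschitz on $[0,1]$.

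For the final step, suppose for contradiction that $A=\bigcup_{j=1}^\infty B_j$ with each $B_j$ measured by some Hausdorff measure $\iH^{g_j}$. Since $f$ is bi-Lipschitz on $[0,1]$, the inverse $f^{-1}:A\to E$ is bi-Lipschitz, and Corollary~\ref{hauseq} yields that each $f^{-1}(B_j)\subset E$ is also measured by $\iH^{g_j}$, and hence is measured. Then $E=\bigcup_j f^{-1}(B_j)$ is a union of countably many measured sets, contradicting Lemma~\ref{daviesbox}. The main delicacy of the plan is ensuring that the genericity in Lemma~\ref{typimage} survives restriction to the open subspace $U$, since a typical $f\in C^1([0,1])$ has $f'$ vanishing somewhere and is not bi-Lipschitz; this is where the locality of the perturbation argument is essential, and I would expect it to be the only step that needs care.
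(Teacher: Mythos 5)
Your proposal is correct and follows essentially the same route as the paper: take $E$ from Lemma~\ref{daviesbox}, push it forward by a bi-Lipschitz $C^1$ map $f$ for which $f(E)$ satisfies the hypothesis of Lemma~\ref{xyuv}, and transfer the negative property back via Corollary~\ref{hauseq}. The ``delicacy'' you flag is handled more simply in the paper: the good functions form a comeager, hence dense, subset of $C^1([0,1])$, so one can pick such an $f$ arbitrarily close to the identity in the $C^1$ norm, and any such $f$ is automatically bi-Lipschitz --- no re-running of the genericity argument inside $\{f: f'>0\}$ is needed.
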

\begin{proof}
Let $E$ be the compact set given by Lemma~\ref{daviesbox}. Assume $E\subset [0,1]$. Lemma~\ref{typimage} implies that for a typical $f\in C^1([0,1])$, $f(E)$ is measured.   Therefore there is also such an $f$ close in $C^1$ norm to the identity $x\mapsto x$. In particular, there is a bi-Lipschitz $C^1$ function $f$ for which $f(E)$ is measured.

Since $E$ is not a union of countably many measured sets and $f$ is bi-Lipschitz, Corollary~\ref{hauseq} implies that $f(E)$ is not a union of countably many sets which are measured by some Hausdorff measures.
\end{proof}


In a sense, we proved that in a carefully chosen category of compact sets a typical set satisfies Theorem~\ref{measnohaus}.
We finish by noting that if we consider non-empty compact sets in the Hausdorff metric, then typical compact sets are measured: they even satisfy Lemma~\ref{xyuv}. Moreover, typical compact sets are measured by a Hausdorff measure as well; this is a theorem of R.~Balka and the present author \cite{tipikus}.

\begin{theorem}
For a typical compact set $K\subset \R$ in the sense of Baire category (in the complete metric space of non-empty compact sets with the Hausdorff distance) there is a gauge function $g$ with $\iH^g(K)=1$.\qed
\end{theorem}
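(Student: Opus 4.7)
The plan is to produce a dense $G_\delta$ subset of the hyperspace of non-empty compact subsets of $\R$ (with the Hausdorff metric) whose members all admit a gauge function giving Hausdorff measure exactly~$1$. Since we are free to choose $g$ depending on $K$, the heart of the matter is showing that typical $K$ admit \emph{some} gauge for which the Hausdorff measure is positive and finite.

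\emph{Dense $G_\delta$.} Given a compact $K_0$ and $\eta > 0$, I would cover $K_0$ by finitely many intervals of length less than $\eta$ and, inside each, insert a small nested Cantor construction with a prescribed branching pattern $(M_k, \delta_k)_{k \ge 1}$ having uniform gap-to-diameter ratios. The resulting ``balanced Cantor'' set lies within Hausdorff distance $\eta$ of $K_0$, so such sets are dense. The property I would then capture as $G_\delta$ is: there exist arbitrarily fine scales $\delta$ and integers $N$ such that $K$ is Hausdorff-close to a disjoint union of $N$ intervals of equal length $\delta$ whose pairwise gaps are at least $c\delta$ for a fixed $c > 0$. Quantifying this over rational parameters gives a countable intersection of open conditions, and density follows from the construction above.

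\emph{Gauge and Frostman bound.} For $K$ in this $G_\delta$ set, extract a rapidly sparsifying subsequence of witnessing scales $(\delta_k, N_k)$. Define $g$ by $g(\delta_k) = 1/N_k$, interpolated linearly and extended monotonically to $[0, \infty)$. The natural cover of $K$ by $N_k$ intervals of length slightly more than $\delta_k$ yields $\iH^g(K) \le 1$. For the lower bound, construct a probability measure $\mu$ on $K$ distributing mass $1/N_k$ across the $N_k$ pieces at each selected scale; the uniform gap condition combined with the rapid sparsification yields a Frostman-type estimate $\mu(I) \le C\, g(\diam I)$ for all sufficiently small intervals $I$, and the mass distribution principle gives $\iH^g(K) \ge 1/C$. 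Finally, replacing $g$ by a positive scalar multiple achieves $\iH^g(K) = 1$ exactly, using that Hausdorff measure is positively homogeneous in the gauge.

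The principal obstacle is the uniform Frostman bound: the gauge $g$ is prescribed only at the sparse scales $\delta_k$, while $\mu$ must be controlled by $g$ at \emph{every} scale. This forces very rapid sparsification of $(\delta_k)$ (so that the chosen interpolation of $g$ on each transitional interval $[\delta_{k+1}, \delta_k]$ remains comparable to the actual mass $\mu$ allocates there), and the gap-to-diameter ratios of the balanced approximations must be uniformly bounded below across all levels. Threading these quantitative constraints through a Baire-category argument — rather than verifying them for a fixed Cantor set — is the technical core of the argument carried out in \cite{tipikus}.
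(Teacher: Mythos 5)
The paper gives no proof of this statement; it is quoted from the Balka--M\'ath\'e paper \cite{tipikus}, so I can only assess your sketch on its own terms. The overall shape (a dense $G_\delta$ of compact sets admitting a balanced multi-scale structure, a gauge prescribed at sparse scales, and a Frostman/mass-distribution lower bound) is the right one and matches the spirit of the balanced constructions used elsewhere in this paper (Claim~\ref{mertek}). But there is a genuine gap in the formulation of your $G_\delta$ condition, and it is not the obstacle you flag at the end. Your condition is a \emph{single-scale} condition: at arbitrarily fine scales $\delta_k$, the set $K$ is Hausdorff-close to $N_k$ equally long, well-separated intervals. This does not control how the $N_{k+1}$ intervals at the next selected scale distribute among the $N_k$ intervals at the previous one. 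Without uniform branching (each level-$k$ interval containing essentially the same number of level-$(k+1)$ intervals), the prescriptions ``mass $1/N_k$ on each level-$k$ piece'' for different $k$ are mutually inconsistent, so they do not define a single measure $\mu$; a weak-$*$ limit of the uniform distributions at the fine scales can put mass close to $1/2$ on a single level-$k$ interval, destroying the Frostman bound at scale $\delta_k$.

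Worse, this is not merely a defect of the proof: for such an unbalanced $K$ the conclusion $\iH^g(K)>0$ can fail for your $g$. If one level-$k$ interval contains almost all $N_{k+1}$ finer intervals and each of the remaining $N_k-1$ contains just one, cover the heavy interval by itself (cost $g(\delta_k)=1/N_k$) and the light ones by their single finer interval (total cost $(N_k-1)/N_{k+1}$, negligible); iterating such mixed covers across scales drives the $g$-premeasure to $0$. So the $G_\delta$ set must be defined by approximability by \emph{balanced nested trees of arbitrary finite depth} (same number of children at every node of a given level, with the gap conditions), exactly the property ``for all $y\in F^i_k$ the sets $F^i_{k+1}\cap[y,y+r^i_k]$ have equal cardinalities'' imposed in the proof of Proposition~\ref{g1g2}. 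Density still holds for this stronger condition (plant identical miniature balanced Cantor sets of depth $n$ near the points of a finite $\eta$-net of $K_0$), and openness for fixed rational parameters is routine; with that repair, your measure construction, the Frostman estimate, and the final rescaling $g\mapsto g/\iH^g(K)$ go through. The interpolation of $g$ on the transitional ranges $[\delta_{k+1},\delta_k]$, which you identify as the principal obstacle, is by comparison the easy part and is handled exactly as in Claim~\ref{mertek}.
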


Note also that for every fixed gauge function $g$ with $\lim_{x\to 0} g(x)=0$, the typical compact set $K$ satisfies $\iH^g(K)=0$.

\bigskip

\noindent{\textbf{Acknowledgement.}} The author is grateful to M\'arton Elekes and Tam\'as Keleti for many helpful discussions and for bringing the open problems to his attention.

\end{document}